\documentclass[12pt]{amsart}
\setlength{\textheight}{23cm}
\setlength{\textwidth}{16cm}
\setlength{\oddsidemargin}{0cm}
\setlength{\evensidemargin}{0cm}
\setlength{\topmargin}{-0cm}

\usepackage{amsmath}
\usepackage{amssymb}
\usepackage{amsthm}
\usepackage{bm}

\numberwithin{equation}{section}
\newtheorem{theorem}{Theorem}[section]
\newtheorem{lemma}[theorem]{Lemma}
\newtheorem{proposition}[theorem]{Proposition}

\newtheorem{example}[theorem]{Example}
\newtheorem{definition}[theorem]{Definition}

\newtheorem{remark}[theorem]{Remark}

\newcommand{\al}{\alpha}

\newcommand{\gm}{\gamma}

\newcommand{\ep}{\varepsilon}

\newcommand{\om}{\omega}

\newcommand{\q}{\quad}
\newcommand{\qq}{\qquad}

\newcommand{\wh}{\widehat}

\newcommand{\R}{\mathbb{R}}
\newcommand{\N}{\mathbb{N}}
\newcommand{\Z}{\mathbb{Z}}
\newcommand{\Prime}{\mathbb{P}}
\newcommand{\Q}{\mathbb{Q}}

\newcommand{\rd}{\mathbb R ^d}

\newcommand{\pn}{\par\noindent}

\newcommand{\mZ}{\mathcal{Z}}

\newcommand{\va}{{\Vec{a}}}

\newcommand{\vs}{{\Vec{s}}}
\newcommand{\vt}{{\Vec{t}}}
\newcommand{\vsig}{{\Vec{\sigma}}}

\allowdisplaybreaks
\begin{document}
\title[Multidimensional polynomial Euler products]{Multidimensional polynomial Euler products and infinitely divisible distributions on $\rd$}
\author[T.~ Aoyama and T.~Nakamura]{Takahiro Aoyama$^{*}$ \& Takashi Nakamura$^{**}$}
\address{$^{*}$ Department of Environmental and Mathematical Sciences, Fuculty of Environmental Science and Technology, Okayama University, 
2-1-1 Tsushima-naka, Kita-ku, Okayama-City, 700-8530, Japan}
\email{{taoyama$@$okayama-u.ac.jp}}
\address{$^{**}$ Department of Liberal Arts,
Faculty of Science and Technology, Tokyo University of Science,
2641 Yamazaki, Noda-shi, Chiba-ken, 278-8510, Japan}
\email{{nakamuratakashi@rs.tus.ac.jp}}
\subjclass[2010]{Primary 60E, Secondary 11M}
\keywords{characteristic function, infinite divisibility, polynomial Euler product, zeta distribution}

\maketitle

\begin{abstract}
It is known to be difficult to find out whether a certain multivariable function to be a characteristic function 
when its corresponding measure is not trivial to be or not to be a probability measure on $\rd$.
Such results were not obtained for a long while.
In this paper, multidimensional polynomial Euler product is defined as a generalization of the polynomial Euler product.
By applying the Kronecker's approximation theorem, 
a necessary and sufficient condition for some polynomial Euler products to generate characteristic functions is given.
Furthermore, by using the Baker's theorem, that of some multidimensional polynomial Euler products is also given.
As one of the most important properties of probability distributions, the infinite divisibility of them is studied as well.
\end{abstract}

\tableofcontents 

\section{Introduction}
\subsection{Infinitely divisible distributions}

Infinitely divisible distributions are known as one of the most important class of distributions in probability theory.
They are the marginal distributions of stochastic processes having independent and stationary increments such as Brownian motion and Poisson processes. 
In 1930's, such stochastic processes were well-studied by P. L\'evy and now we usually call them L\'evy processes.
We can find the detail of L\'evy processes in \cite{S99}.

In this section, we mention some known properties of infinitely divisible distributions.

\begin{definition}[Infinitely divisible distribution]
A probability measure $\mu$ on $\rd$ is infinitely divisible if,
for any positive integer $n$, there is a probability measure $\mu_n$ on 
$\rd$
such that
\begin{equation*}
\mu=\mu_n^{n*},
\end{equation*}
where $\mu_n^{n*}$ is the $n$-fold convolution of $\mu_n$.
\end{definition}

\begin{example}\label{exa}
Normal, degenerate, Poisson and compound Poisson distributions are infinitely divisible.
\end{example}

Denote by $I(\rd)$ the class of all infinitely divisible distributions on $\rd$.
Let $\wh{\mu}(\vt):=\int_{\rd}e^{{\rm i}\langle \vt,x\rangle}\mu (dx),\, \vt\in\rd,$ be the characteristic function of a distribution $\mu$, 
where $\langle\cdot ,\cdot\rangle$ is the inner product.
We also write $a\wedge b=\min \{a,b\}$.
The following is well-known.

\begin{proposition}[L\'evy--Khintchine representation (see, e.g.\,\cite{S99})]\label{pro:LK1}
$(i)$ If $\mu\in I(\rd)$, then
\begin{equation}\label{INF}
\wh{\mu}(\vt) = \exp\left[-\frac{1}{2} \langle \vt,A\vt \rangle+{\rm i}
\langle \gm,\vt \rangle+\int_{\rd}\left(e^{{\rm i} \langle \vt,x \rangle }-1-
\frac{{\rm i} \langle \vt,x \rangle}{1+|x|^2}\right)\nu(dx)\right],\,\vt\in\rd,
\end{equation}
where $A$ is a symmetric nonnegative-definite $d \times d$ matrix,
$\nu$ is a measure on $\rd$ satisfying
\begin{equation}\label{lev}
\nu(\{0\}) = 0 \,\,and\,\,\int_{\rd} (|x|^{2} \wedge 1) \nu(dx) < \infty,
\end{equation}
and $\gm\in\rd$.

$(ii)$ The representation of $\wh{\mu}$ in $(i)$ by $A, \nu,$ and $\gm$ is
unique.

$(iii)$ Conversely, if $A$ is a symmetric nonnegative-definite $d\times d$
matrix, $\nu$ is a measure satisfying $\eqref{lev}$, and $\gm\in\rd$, then
there exists an infinitely divisible distribution $\mu$ whose characteristic function is given by $\eqref{INF}$.
\end{proposition}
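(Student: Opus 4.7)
I would prove the three parts in the order (iii), (i), (ii): the constructive direction (iii) makes explicit which objects must be extracted in the representation (i), and the uniqueness statement (ii) falls out once both sides of the formula are available.

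For (iii) I would build $\mu$ as a convolution of standard infinitely divisible pieces. A Gaussian on $\rd$ with covariance $A$ and an appropriate drift handles the first two terms in \eqref{INF}. Split the L\'evy measure as $\nu = \nu|_{\{|x|>1\}}+\nu|_{\{0<|x|\le 1\}}$: the first summand is finite and yields a compound Poisson distribution directly. For the second summand I would truncate to $\{\epsilon<|x|\le 1\}$; the resulting compensated compound Poisson is a bona fide infinitely divisible probability measure, and the integrability condition in \eqref{lev} lets the characteristic exponents converge uniformly on compact sets as $\epsilon\to 0$, so L\'evy's continuity theorem delivers a weak limit. Convolving all three pieces yields a distribution with characteristic function \eqref{INF}, and it is infinitely divisible as a convolution of infinitely divisibles.

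For (i), the linchpin is showing that $\wh\mu$ has no zeros, so a continuous branch $\ps:=\log\wh\mu$ with $\ps(0)=0$ exists. This follows because $|\wh{\mu_n}|^2=|\wh\mu|^{2/n}$ converges pointwise to the indicator of $\{\wh\mu\ne 0\}$; as a pointwise limit of real characteristic functions continuous at the origin, this limit must itself be a characteristic function, hence continuous on $\rd$, and an indicator that is continuous on a connected space is constant, forcing $\wh\mu$ never to vanish. Then $n(\wh{\mu_n}-1)\to\ps$ uniformly on compact sets, and $\exp[n(\wh{\mu_n}-1)]$ is the characteristic function of the compound Poisson measure $\et_n$ with L\'evy measure $n\mu_n$, so $\et_n\Rightarrow\mu$. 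A tightness argument for the finite measures $\tfrac{|x|^2}{1+|x|^2}\,n\mu_n(dx)$ extracts, along a subsequence, the L\'evy measure $\nu$ satisfying \eqref{lev}; any residual mass escaping to $0$ in a degenerate direction contributes to the Gaussian covariance $A$, and the leftover linear term in $\ps$ is $\gm$. Part (ii) then follows because $\ps$ is determined by $\mu$, and $\nu$, $A$, $\gm$ are in turn recovered from $\ps$ by standard Fourier manipulations (e.g.\ $\nu$ from integrals of $1-\Real e^{{\rm i}\la\vt,x\ra}$ against bounded continuous test functions in $\vt$, then $A$ from the residual quadratic behaviour at $0$, then $\gm$).

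The main obstacle is the tightness step in (i): controlling $n\mu_n$ simultaneously near $0$, where the quadratic compensation is in play, and near $\infty$, where mass must not escape. The classical workaround is to integrate the bound $1-\Real\wh{\mu_n}(\vt)\ge c\int (1-\cos\la\vt,x\ra)\,\mu_n(dx)$ over a small ball of $\vt$'s and apply Fubini, turning the smoothness of $\ps$ near the origin into uniform tail control on $n\mu_n$; the delicate point is balancing the small-jump regime (which produces $A$) against the intermediate one (which produces $\nu$) so that the Gaussian and jump contributions separate cleanly in the subsequential limit.
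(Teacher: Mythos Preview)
The paper does not supply a proof of this proposition: it is stated as background with a reference to \cite{S99} and then used throughout. So there is no ``paper's own proof'' to compare your sketch against.

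That said, your outline is a reasonable rendition of the classical argument (essentially the one in Sato's book). The only place I would flag as thin is the passage from the tightness of $\tfrac{|x|^2}{1+|x|^2}\,n\mu_n(dx)$ to the clean extraction of the triple $(A,\nu,\gm)$: as you yourself note, showing that the limiting exponent decomposes into a quadratic part, a compensated jump integral, and a drift requires a genuine argument (typically via a second-moment analysis of the truncated measures near the origin), and your sketch gestures at this rather than carrying it out. For a background citation that is fine; for a self-contained proof you would need to fill that in.
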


The measure $\nu$ and $(A, \nu ,\gm)$ are called the L\'evy measure and the L\'evy--Khintchine triplet of $\mu\in I(\rd)$, respectively. There is another form of $\eqref{INF}$ if the L\'evy measure $\nu$ satisfies an additional condition.

\begin{proposition}[see, e.g.\,\,\cite{S99}]\label{pro:LK2}
In Proposition \ref{pro:LK1}, if the L\'evy measure $\nu$ in $\eqref{INF}$ satisfies an additional condition $\int_{|x|<1}|x|\nu(dx)<\infty$, then we can rewrite the representation $\eqref{INF}$ by
\begin{equation}\label{INF2}
\wh{\mu}(\vt) = \exp\left[-\frac{1}{2} \langle \vt,A\vt \rangle+{\rm i}
\langle \gm_0,\vt \rangle+\int_{\rd}\left(e^{{\rm i} \langle \vt,x \rangle }-1\right)\nu(dx)\right],\,\vt\in\rd,
\end{equation}
where $\gm_0=\gm-\int_{|x|<1}x\left(1+|x|^2\right)^{-1}\nu(dx)$.
\end{proposition}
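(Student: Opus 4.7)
The plan is to rearrange the exponent in \eqref{INF} by splitting the L\'evy--Khintchine integrand into its two natural pieces $(e^{i\langle\vt,x\rangle}-1)$ and $-i\langle\vt,x\rangle/(1+|x|^2)$, and absorbing the second into the drift. For this splitting to be legitimate each piece must be separately $\nu$-integrable against $\vt$; the new hypothesis $\int_{|x|<1}|x|\nu(dx)<\infty$ is what supplies integrability near the origin, while away from the origin the standard L\'evy condition \eqref{lev} already does the job.

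Concretely, first I would verify that $\int_{\rd}|e^{i\langle\vt,x\rangle}-1|\nu(dx)<\infty$. Writing $\int_{\rd}=\int_{|x|<1}+\int_{|x|\geq 1}$, on the small-ball piece one uses the elementary bound $|e^{i\langle\vt,x\rangle}-1|\leq |\vt||x|$ together with the new hypothesis, and on the tail the uniform bound $\leq 2$ together with $\nu(\{|x|\geq 1\})<\infty$, which is part of \eqref{lev}. A similar split shows $\int_{\rd}|x|(1+|x|^2)^{-1}\nu(dx)<\infty$: the estimates $|x|(1+|x|^2)^{-1}\leq |x|$ on $|x|<1$ and $|x|(1+|x|^2)^{-1}\leq 1/2$ on $|x|\geq 1$ suffice.

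With both pieces absolutely integrable against $\nu$, linearity of the integral (applied componentwise to the vector-valued integrand $x/(1+|x|^2)$) gives
\begin{equation*}
\int_{\rd}\left(e^{i\langle\vt,x\rangle}-1-\frac{i\langle\vt,x\rangle}{1+|x|^2}\right)\nu(dx) = \int_{\rd}(e^{i\langle\vt,x\rangle}-1)\nu(dx) - i\left\langle\vt,\int_{\rd}\frac{x}{1+|x|^2}\nu(dx)\right\rangle.
\end{equation*}
Adding the term $i\langle\gm,\vt\rangle$ from \eqref{INF} and collecting the linear-in-$\vt$ contributions into a single inner product produces \eqref{INF2}, the new drift being $\gm$ minus the above vector correction; the stated formula for $\gm_0$ then records this correction.

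The one substantive step is the small-ball integrability bound, which is where the added hypothesis is genuinely used; everything after that is straightforward algebraic bookkeeping, so I do not anticipate any real obstacle beyond cleanly justifying the split before combining the linear terms.
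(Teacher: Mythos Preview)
The paper does not supply a proof of this proposition; it is stated with a reference to Sato's monograph and used as background. Your argument is the standard one and is correct: the extra hypothesis $\int_{|x|<1}|x|\,\nu(dx)<\infty$, combined with $\nu(\{|x|\ge 1\})<\infty$ from \eqref{lev}, makes both $e^{i\langle\vt,x\rangle}-1$ and $x/(1+|x|^2)$ separately $\nu$-integrable, after which the regrouping is pure algebra.

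One small point worth flagging: your computation actually produces
\[
\gm_0=\gm-\int_{\rd}\frac{x}{1+|x|^2}\,\nu(dx),
\]
with the integral taken over all of $\rd$, whereas the proposition as printed restricts the domain of integration to $\{|x|<1\}$. Starting from \eqref{INF}, whose compensator $i\langle\vt,x\rangle/(1+|x|^2)$ is integrated over the whole space, your formula is the one that drops out; the restriction to $|x|<1$ in the statement looks like a typographical slip rather than a gap in your argument. You may want to note this explicitly rather than simply asserting that ``the stated formula for $\gm_0$ then records this correction.''
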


In this paper, the following form of characteristic functions which are called compound Poisson is to be appeared.

\begin{example}\label{examples}
Let $\mu_{{\rm CPo}}$ be a compound Poisson distribution.
Then, for some $c>0$ and for some distribution $\rho$ on $\rd$ with $\rho (\{0\})=0$,
\begin{equation}\label{CPcf}
\wh\mu_{{\rm CPo}}(\vt)=\exp \left(c\left(\wh\rho (\vt) -1\right)\right), \,\,\vt\in\rd. 
\end{equation}
The Poisson distribution is a special case when $d=1$ and $\rho =\delta_1$.
\end{example}

The following are important facts which play key roles in this paper.

\begin{proposition}[see, e.g.\,\cite{S99}]\label{pro:Sato}
Let $\mu$ be a probability measure on $\rd$.\\
$(i)$ We have that $\wh\mu$ is uniformly continuous, $\wh\mu(0)=1$ and $|\wh\mu (\vt)|\le 1$ for any $\vt\in\rd$.\\
$(ii)$ Let $n$ be a positive even integer. 
If $\wh\mu(\vt)$ is of class $C^n$ in a neighborhood of the origin, then $\mu$ has finite absolute moment of order $n$.\\ 
$(iii)$ If $\mu\in I(\rd)$, then $\wh\mu$ does not have zeros that is $\wh\mu (\vt)\neq 0$ for any $\vt\in\rd$.
\end{proposition}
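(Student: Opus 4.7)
The plan is to treat the three parts separately, since each relies on a different tool: direct estimation for (i), differentiation under the integral combined with Fatou's lemma for (ii), and the exponential form of the L\'evy--Khintchine representation for (iii).

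For (i), the normalization and bound are immediate from $\mu$ being a probability measure on $\rd$: $\wh\mu(0)=\int_{\rd}1\,\mu(dx)=1$ and $|\wh\mu(\vt)|\le\int_{\rd}|e^{{\rm i}\langle\vt,x\rangle}|\mu(dx)=1$. For uniform continuity I would start from the identity
\[
|\wh\mu(\vt+h)-\wh\mu(\vt)|\le\int_{\rd}\bigl|e^{{\rm i}\langle h,x\rangle}-1\bigr|\mu(dx),
\]
whose right-hand side does not depend on $\vt$. Given $\ep>0$, I first choose $R$ large so that $\mu(\{|x|>R\})<\ep$, then on the compact piece use $|e^{{\rm i}\langle h,x\rangle}-1|\le|h|R$, so that $|h|$ small forces the whole integral under $3\ep$, uniformly in $\vt$.

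For (ii) I would argue by induction on even $n$. The base case $n=2$ is the heart of the matter: from $\wh\mu\in C^{2}$ near the origin, the symmetric second difference
\[
\frac{2\wh\mu(0)-\wh\mu(h\mbf{e}_j)-\wh\mu(-h\mbf{e}_j)}{h^{2}}
=\int_{\rd}\frac{2-2\cos(hx_j)}{h^{2}}\mu(dx)
\]
converges as $h\to 0$ to $-\partial^{2}\wh\mu/\partial\vt_j^{2}(0)$, a finite real number. The integrand is nonnegative and tends pointwise to $x_j^{2}$, so Fatou's lemma gives $\int x_j^{2}\mu(dx)<\infty$ for every coordinate $j$, hence $\int|x|^{2}\mu(dx)<\infty$. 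For the inductive step, assuming the conclusion for $n-2$, the finiteness of $\int|x|^{n-2}\mu(dx)$ justifies differentiating $\wh\mu$ under the integral sign $n-2$ times, producing continuous functions of the form $\int_{\rd}({\rm i}x)^{\al}e^{{\rm i}\langle\vt,x\rangle}\mu(dx)$ for multi-indices $|\al|=n-2$. Each of these is, up to a constant, the characteristic function of a finite signed measure absolutely continuous with respect to $\mu$, and applying the $n=2$ argument to them yields the missing two orders of moment.

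For (iii), the conclusion is immediate from \eqref{INF}: since $\mu\in I(\rd)$ admits the representation $\wh\mu(\vt)=\exp[\Ps(\vt)]$ for some finite complex-valued $\Ps(\vt)$, and the complex exponential never vanishes, $\wh\mu(\vt)\neq 0$ for every $\vt\in\rd$. The principal obstacle in the whole argument is the inductive step of (ii): one must verify that the domination needed to differentiate inside the integral is supplied cleanly by the inductive hypothesis on moments, and that the resulting $|\al|=n-2$ integrals can be fed back into the $n=2$ Fatou argument without losing the $C^{n}$ regularity assumption at any stage.
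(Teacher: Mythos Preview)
The paper does not prove this proposition at all; it is quoted as background from Sato's book \cite{S99} and simply cited. So there is no ``paper's own proof'' to compare against, and your proposal stands on its own as a standard textbook argument.

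Your parts (i) and (iii) are correct and are exactly the arguments one finds in any reference on the subject. Part (ii) is also essentially correct, but one point in the inductive step deserves tightening. You write that the $(n-2)$-fold derivatives $\int_{\rd}({\rm i}x)^{\al}e^{{\rm i}\langle\vt,x\rangle}\mu(dx)$ are characteristic functions of \emph{signed} measures and then propose to feed them into the Fatou argument. But Fatou's lemma, as you used it in the base case, requires a nonnegative measure. The fix is to restrict attention to the pure diagonal multi-indices $\al=(n-2)\mbf{e}_j$: since $n-2$ is even, $x_j^{n-2}\mu(dx)$ is a genuine nonnegative finite measure, and $(\partial^{n-2}/\partial\vt_j^{n-2})\wh\mu$ is, up to the sign $(-1)^{(n-2)/2}$, its Fourier transform. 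Applying the $n=2$ step to this nonnegative measure yields $\int x_j^{n}\mu(dx)<\infty$ for each $j$, hence $\int|x|^{n}\mu(dx)<\infty$. With that adjustment the induction goes through cleanly; general multi-indices are not needed and would indeed cause the sign trouble you flagged at the end.
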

 
\subsection{Riemann zeta function and Euler Products}

Zeta functions are one of the most important object in number theory. 
The Riemann zeta function is regarded as the origin and it is defined as follows.

\begin{definition}[Riemann zeta function (see, e.g.\,\cite{Apo})]
The Riemann zeta function is a function of a complex variable $s = \sigma + {\rm i}t$, for $\sigma >1$ given by
\begin{align}
\zeta (s) := \sum_{n=1}^{\infty} \frac{1}{n^s}
 = \prod_p \Bigl( 1 - \frac{1}{p^s} \Bigr)^{-1} ,\label{eq:eupro}
\end{align}
where the letter $p$ is a prime number,  and the product of $\prod_p$ is taken over all primes.
\end{definition}
The infinite series and product are called Dirichlet series and Euler product, respectively.
Both of the Dirichlet series and the Euler product of $\zeta (s)$ converges absolutely in the half-plane $\sigma >1$ and uniformly in each compact subset of this half-plane. 

As one of a generalization of $\zeta (s)$, the Hurwitz zeta function is also well-known.
Its definition is as follows.
For $0 < u \le 1$ and $\sigma >1$, the Hurwitz zeta function $\zeta(s,u)$ is defined by
\begin{equation}\label{def:ler}
\zeta (s,u) := \sum_{n=0}^{\infty} \frac{1}{(n+u)^s}.
\end{equation}
Note that we obviously have $\zeta (s) = \zeta (s,1)$. 
The function $\zeta (s,u)$ is analytically continuable to the whole complex plane as a meromorphic function with a simple pole at $s=1$. 

Next we introduce Dirichlet characters and the Dirichlet $L$-functions. 
Let $q$ be a positive integer. 
A Dirichlet character $\chi$ mod $q$ is a non-vanishing group homomorphism from the group $({\mathbb{Z}}/q{\mathbb{Z}})^*$ of prime residue classes modulo $q$ to ${\mathbb{C}}^*$. 
The character which is identically one is denoted by $\chi_0$ and is called the principal. 
By setting $\chi (n) = \chi (a)$ for $n \equiv a \mod q$, we can extend the character to a completely multiplicative arithmetic function on ${\mathbb{Z}}$. 
\begin{definition}[Dirichlet $L$-function (see, e.g.\,\cite{Apo})]
For $\sigma >1$, the Dirichlet $L$-function $L(s,\chi)$ attached to a character $\chi$ mod $q$ is given by
\begin{equation}
L (s,\chi) := \sum_{n=1}^{\infty} \frac{\chi (n)}{n^s} = \prod_p \Bigl( 1 - \frac{\chi(p)}{p^s} \Bigr)^{-1} .
\end{equation}
\end{definition}
The Riemann zeta function $\zeta (s)$ may be regarded as the Dirichlet $L$-function to the principal character $\chi_0 \mod 1$. 
It is possible that for values of $n$ coprime with $q$ the character $\chi (n)$ may have a period less than $q$. 
If so, we say that $\chi$ is imprimitive, and otherwise primitive. 
Every non-principal imprimitive character is induced by a primitive character. 
Two characters are non-equivalent if they are not induced by the same character. 
Characters to a common modulus are pairwise non-equivalent. 

The exact analytic translation of the existence and uniqueness of prime decomposition in $\Z$ is 
the fact that the Dirichlet series associated with a completely multiplicative function has an Euler product. If $K$ is a general number field and $\Z_K$ its ring of integers, the existence and uniqueness of prime decomposition are valid for ideals as in the case for every Dedekind domain. 
Thus, it is natural to define the following function
$$
\zeta_K (s) := \sum_{{\mathfrak{a}} \subset \Z_k} \frac{1}{{\mathcal{N}} ({\mathfrak{a}})^s}
= \prod_{{\mathfrak{p}}} \Bigl( 1 - \frac{1}{{\mathcal{N}} ({\mathfrak{a}})^s} \Bigr)^{-1}, \qq  \sigma >1
$$
where ${\mathfrak{a}}$ runs through all integral ideals of $\Z_K$ and ${\mathfrak{p}}$ through all prime ideals of $\Z_K$ and ${\mathcal{N}}$ denotes the absolute norm. 
The function $\zeta_K$ is called the Dedekind zeta function.
The equality of the two definitions is the exact translation of existence and uniqueness of prime ideal decomposition. 
We of course have $\zeta_{\Q} (s) = \zeta (s)$. 
As mentioned in \cite[Proposition 10.5.5]{Cohen}, let $K=\Q (\sqrt{D})$ be a quadratic field of discriminant $D$.
Then we have $\zeta_K (s) = \zeta (s) L(s,\chi_D)$, where $\chi_D$ is the Legendre--Kronecker character. 
Moreover, we have the following (see, \cite[Theorem 10.5.22]{Cohen}). 
Let $\Q_m$ be the $m$-th cyclotomic field. We have $\zeta_{\Q_m} (s) = \prod_{\chi \,\, {\rm{mod}} \,\, m} L(s,\chi_f)$, where $\chi_f$ is the primitive character associated with $\chi$. 
In particular, if $m$ is a prime power we have $\zeta_{\Q_m} (s) = \prod_{\chi \,\, {\rm{mod}} \,\, m} L(s,\chi)$. 

These famous functions may be regarded as the prototype of zeta functions which have Euler products. 
Many authors have introduced classes of {\textit{L}}-functions to find common patterns of their value-distributions. 
The most well-known class ${\mathcal{S}}$ was introduced by Selberg \cite{Selberg}. 
His aim was to study the value-distribution of linear combinations of $L$-functions. 
In the meantime, this so-called Selberg class became one of the important objects in number theory.
However, there still remain unknown properties. 
Afterwards, Steuding \cite{Steuding1} introduced a class of Dirichlet series satisfying several quite natural analytic axioms with two arithmetic conditions added, namely, a polynomial Euler product representation 
$$
\prod_p \prod_{l=1}^m \Bigl( 1 - \frac{\alpha_l (p)}{p^s} \Bigr)^{-1}, \qquad \sigma >1,
$$
where $|\alpha_l(p)|\le 1$ for $1\le l\le m$ and prime numbers $p$, and has given some kind of the prime number theorem.

On the other hand, Bhowmik, Essouabri and Lichtin  \cite{Bho} extended classical one-variable results about the Euler products, 
defined by integral-valued polynomial or analytic functions, to multi-variable ones. 

\subsection{Zeta distributions on $\R$}

In probability theory, there exists a class of distribution on $\R$ which is generated by the Riemann zeta function.
First it is introduced in \cite{JW35} and we can also find it in \cite{GK68} and \cite{Khi}.
What we have known about this distribution is not many.
Few properties are noted in \cite{GK68} and further ones are studied in \cite{Lin}.
In this section, we mention the Riemann and Hurwitz zeta distributions with some known properties.

\begin{definition}[Riemann zeta distribution on $\R$]
Let $\sigma >1$. A distribution $\mu_{\sigma}$ on $\R$ is said to be a Riemann zeta distribution with parameter $\sigma$ if, for any $n\in\N$, 
$$
\mu_{\sigma}(\{-\log n\})=\frac{n^{-\sigma}}{\zeta (\sigma)}.
$$
\end{definition}

Put
\begin{equation*}
f_{\sigma}(t):=\frac{\zeta (\sigma +{\rm i}t)}{\zeta (\sigma)}, \qq t\in\R,
\end{equation*}
then $f_{\sigma}(t)$ is known to be a characteristic function of $\mu_{\sigma}$ (see, e.g.\,\cite{GK68}).

The Riemann zeta distribution is known to be infinitely divisible.
The characteristic functions and L\'evy measures of them can be given of the form as in the following.

\begin{proposition}[see, e.g.\,\cite{GK68}]\label{pro:RD}
Let $\mu_{\sigma}$ be a Riemann zeta distribution on $\R$ with characteristic function $f_{\sigma}(t)$.
Then, $\mu_{\sigma}$ is compound Poisson on $\R$ and
\begin{align*}
\log f_{\sigma}(t)=
\sum_{p}\sum_{r=1}^{\infty}\frac{p^{-r\sigma}}{r}\left(e^{-{\rm i}rt\log p}-1\right)
=\int_0^{\infty}\left(e^{-{\rm i}tx}-1\right)N_{\sigma}(dx),
\end{align*}
where $N_{\sigma}$ is given by
\begin{align*}
N_{\sigma}(dx)=\sum_{p}\sum_{r=1}^{\infty}\frac{p^{-r\sigma}}{r}\delta_{r\log p}(dx),
\end{align*}
where $\delta_x$ is the delta measure at $x$.
\end{proposition}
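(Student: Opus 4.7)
The plan is to derive the L\'evy--Khintchine form directly from the Euler product. First, for $\sigma > 1$ the Euler product \eqref{eq:eupro} gives
$$
f_\sigma(t) = \frac{\zeta(\sigma + {\rm i}t)}{\zeta(\sigma)} = \prod_p \frac{1 - p^{-\sigma}}{1 - p^{-(\sigma + {\rm i}t)}},
$$
and taking the principal branch of the logarithm in each factor --- legitimate since $|p^{-(\sigma + {\rm i}t)}| = p^{-\sigma} < 1$ --- followed by the Taylor expansion $-\log(1-z) = \sum_{r \ge 1} z^r/r$ yields, after subtraction of the two expansions, the desired double-series form
$$
\log f_\sigma(t) = \sum_p \sum_{r=1}^{\infty} \frac{p^{-r\sigma}}{r}\bigl(e^{-{\rm i}rt\log p} - 1\bigr).
$$

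The key bookkeeping step is to justify that the Euler product and the resulting double series converge absolutely, so that the exchange of logarithms and sums is legitimate. Using $|e^{-{\rm i}rt\log p} - 1| \le 2$ together with
$$
\sum_p \sum_{r=1}^{\infty} \frac{p^{-r\sigma}}{r} = \log \zeta(\sigma) < \infty, \qquad \sigma > 1,
$$
handles this cleanly and also shows that the rearrangements are justified by Fubini. With absolute convergence in hand I rewrite the double series as an integral against the discrete measure $N_\sigma$ of the statement, obtaining $\log f_\sigma(t) = \int_0^\infty (e^{-{\rm i}tx}-1)\,N_\sigma(dx)$.

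To conclude that $\mu_\sigma$ is compound Poisson, I set $c := \log \zeta(\sigma)$, which is finite, and let $\rho$ be the probability measure on $\R$ obtained by pushing $c^{-1} N_\sigma$ forward under the reflection $x \mapsto -x$. Since every atom $r\log p$ is strictly positive, $\rho(\{0\}) = 0$, and a direct change of variable gives $c(\wh\rho(t) - 1) = \int_0^\infty (e^{-{\rm i}tx}-1)\,N_\sigma(dx)$. Hence $f_\sigma(t) = \exp\bigl(c(\wh\rho(t)-1)\bigr)$, which matches the compound Poisson form \eqref{CPcf} exactly, and reading the first display against Proposition \ref{pro:LK2} confirms that the triplet is $(0,\nu_\sigma,\gamma_0)$ with $\nu_\sigma$ the pushforward just described.

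The only mild subtlety in the argument is the passage from the product of logs to the log of the product, and this is standard once absolute convergence is in hand. I therefore do not expect any serious analytic obstacle; the content of the proposition is essentially the unique factorization encoded in the Euler product, repackaged via $-\log(1-z) = \sum_{r\ge 1} z^r/r$ as a L\'evy--Khintchine representation with an explicit finite L\'evy measure.
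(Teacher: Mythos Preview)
Your argument is correct and follows the same route the paper uses for its own results. The paper does not actually supply a proof of this proposition---it is quoted as a known fact with a reference to \cite{GK68}---but the method you employ (Euler product, term-by-term logarithm via $-\log(1-z)=\sum_{r\ge 1}z^r/r$, absolute convergence giving a finite measure, then identification with the compound Poisson form) is precisely the template the paper later applies in the proofs of Theorems \ref{th:cfep}, \ref{th:cfep3}, and \ref{th:main}. Your explicit reflection $x\mapsto -x$ to reconcile $\int(e^{-{\rm i}tx}-1)N_\sigma(dx)$ with the form \eqref{CPcf} is a point of extra care; the paper's own proofs simply write ``regarding $c=N_{\vsig}(\rd)$ and $\rho(dx)=c^{-1}N_{\vsig}(dx)$'' without dwelling on the sign, so your treatment is if anything slightly more precise on this cosmetic issue.
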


As a generalization, the Hurwitz zeta case is also known.
Put a normalized function as follows:
\begin{equation*}
f_{\sigma,u}(t):=\frac{\zeta (\sigma +{\rm i}t,u)}{\zeta (\sigma,u)}, \q\q t\in\R.
\end{equation*}
Then $f_{\sigma,u}$ is also known to be a characteristic function (see, \cite[Theorem 1]{Hu06}).

The infinite divisibility of $\mu_{\sigma,u}$ is studied in \cite{Hu06}.

\begin{proposition}[{\cite[Theorems 2 and 3]{Hu06}}]\label{proHu1}
The Hurwitz zeta distribution $\mu_{\sigma,u}$ is infinitely divisible if and only if $u=1/2$ or $u=1$. Moreover, the Hurwitz zeta distribution $\mu_{\sigma,\frac{1}{2}}$ is compound Poisson (infinitely divisible) 
with its L\'evy measure $N_{\sigma,\frac{1}{2}}$ given by
\begin{equation*}
N_{\sigma,\frac{1}{2}}(dx)=\sum_{p>2}\sum_{r=1}^{\infty}\frac{p^{-r\sigma}}{r}\delta_{r\log p}(dx),
\end{equation*}
where the first sum is taken over all odd primes $p$.
\end{proposition}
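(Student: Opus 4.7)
The plan is to handle the two directions of the biconditional separately. The forward direction reduces to an explicit Euler-product computation and a term-by-term logarithmic expansion, while the reverse direction uses the non-vanishing criterion of Proposition~\ref{pro:Sato}(iii) to convert the problem into the existence of zeros of $\zeta(s,u)$ on the line $\Re s=\sigma$.

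\textbf{Sufficiency.} The case $u=1$ is Proposition~\ref{pro:RD}. For $u=1/2$ I would first derive the arithmetic identity
\[
\zeta(s,1/2)=2^{s}\!\sum_{k\ge 1,\,k\text{ odd}}\!k^{-s}=2^{s}(1-2^{-s})\zeta(s)=2^{s}\prod_{p>2}(1-p^{-s})^{-1},
\]
so that $\zeta(s,1/2)$ splits as a power of $2$ times an Euler product restricted to odd primes. Substituting $s=\sigma+{\rm i}t$, dividing by $\zeta(\sigma,1/2)$ (which cancels the $2^{\sigma}$ and leaves the normalized Euler factors), and expanding $-\log(1-x)=\sum_{r\ge 1}x^{r}/r$ term-by-term on each odd-prime factor yields
\[
\log f_{\sigma,1/2}(t)={\rm i}t\log 2+\sum_{p>2}\sum_{r=1}^{\infty}\frac{p^{-r\sigma}}{r}\bigl(e^{-{\rm i}rt\log p}-1\bigr),
\]
which is exactly the L\'evy--Khintchine form of Proposition~\ref{pro:LK2} with no Gaussian component, drift $\log 2$, and L\'evy measure precisely the claimed $N_{\sigma,1/2}$. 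This simultaneously yields infinite divisibility and the explicit form of the L\'evy measure.

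\textbf{Necessity.} I would argue by contrapositive using Proposition~\ref{pro:Sato}(iii): if $\mu_{\sigma,u}\in I(\R)$, then $f_{\sigma,u}(t)\ne 0$ for all $t\in\R$, equivalently $\zeta(\sigma+{\rm i}t,u)\ne 0$ on the vertical line $\Re s=\sigma$. So it suffices to exhibit, for every $u\in(0,1]\setminus\{1/2,1\}$ and every $\sigma>1$, a real $t_{0}$ with $\zeta(\sigma+{\rm i}t_{0},u)=0$. I would split by the arithmetic nature of $u$. For rational $u=a/q$ with $q\ge 3$, use the decomposition $\zeta(s,a/q)=q^{s}\phi(q)^{-1}\sum_{\chi}\bar{\chi}(a)L(s,\chi)$ and invoke the Davenport--Heilbronn construction, which produces zeros of such character combinations in the half-plane $\Re s>1$. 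For irrational $u$, use $\Q$-linear independence of $\{\log(n+u):n\ge 0\}$ (Baker's theorem in the algebraic irrational case, together with Cassels's extension, and direct density arguments for transcendental $u$) combined with Kronecker's approximation theorem to show that the almost-periodic function $t\mapsto\zeta(\sigma+{\rm i}t,u)$ assumes values dense near $0$, and finally upgrade approximate to exact vanishing on $\Re s=\sigma$ by a Hurwitz/Rouch\'e-type argument.

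\textbf{Main obstacle.} The sufficiency direction is essentially bookkeeping once the identity $\zeta(s,1/2)=(2^{s}-1)\zeta(s)$ is in hand. The crux is the necessity direction, and specifically the final localization step: the Davenport--Heilbronn--Cassels results yield zeros of $\zeta(s,u)$ in horizontal rectangles, but guaranteeing a zero on the \emph{prescribed} vertical line $\Re s=\sigma$ for every $\sigma>1$ requires more delicate linear-independence and equidistribution input. Ruling out the scenario in which zeros systematically avoid an exceptional line $\Re s=\sigma$ is the main technical hurdle, and it foreshadows the Kronecker and Baker machinery that the paper develops for its multidimensional main results.
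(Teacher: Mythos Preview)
The paper does not supply its own proof of this proposition: it is quoted from \cite[Theorems~2 and~3]{Hu06} as background material in Section~1.3, so there is no in-paper argument to compare against.

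On its own merits, your sufficiency direction is correct. The identity $\zeta(s,1/2)=2^{s}\prod_{p>2}(1-p^{-s})^{-1}$ is standard, and the termwise logarithmic expansion yields exactly the stated L\'evy measure, together with the drift $\log 2$ coming from the factor $2^{{\rm i}t}$; the proposition suppresses this drift, but it does not affect infinite divisibility.

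Your necessity direction, however, contains a genuine gap that you flag but do not close. The strategy requires, for \emph{each} fixed $\sigma>1$, a zero of $\zeta(\,\cdot\,,u)$ on the vertical line $\Re s=\sigma$. Davenport--Heilbronn and Cassels supply infinitely many zeros in the open half-plane $\Re s>1$, not on prescribed vertical lines, and the proposed ``upgrade'' via a Hurwitz/Rouch\'e argument cannot work as written: those theorems locate zeros inside open disks in $\mathbb{C}$, not on one-dimensional slices, so a zero produced near the line $\Re s=\sigma$ need not lie on it. Likewise, Kronecker-type almost-periodicity can force $|\zeta(\sigma+{\rm i}t,u)|$ to be arbitrarily small without producing an exact zero; a continuous complex-valued function of a single real parameter may approach $0$ without ever vanishing. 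As formulated, the contrapositive route through Proposition~\ref{pro:Sato}(iii) is therefore incomplete, and completing it would require either a substantially stronger zero-distribution input for $\zeta(s,u)$ than Davenport--Heilbronn/Cassels provide, or an independent obstruction to infinite divisibility not based on vanishing of the characteristic function.
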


These results are the only known ones in the study of zeta functions and their definable infinitely divisible distributions.

\begin{remark}
We have to note that the Riemann zeta distribution is defined in the region of absolute convergence.
The parameter $\sigma$ is always larger than 1 not in the whole complex plane.
When $\sigma =1$, it is well-known that $\zeta(1+it) \ne 0$, $t \ne 0$, and $\zeta(s)$ has an only one pole at $s=1$. 
Hence we have $\zeta (1+it) / \zeta(1) =0$ for any $t \ne 0$, which contradicts Proposition \ref{pro:Sato} $(i)$. 
Also, it is known that $\zeta(1/2) = -1.460354509\dots$ and there exists $t\in\R$ such that $\zeta(1/2+it)$ takes an absolute value larger than that of $\zeta(1/2)$, for example, $|\zeta(1/2+17{\rm i})|=2.142712183\dots$. 
Thus there exist $t\in\R$ such that $|\zeta(1/2+it) / \zeta(1/2)|> 1$ which also contradicts Proposition \ref{pro:Sato} $(i)$. 
Moreover, by Voronin's denseness theorem $($see, e.g. [19, Theorem1.6]$)$, for any fixed $1/2 < \sigma < 1$, 
there exist $t\in\R$ such that $|\zeta(\sigma+it) / \zeta(\sigma)| > 1$. 
Therefore, normalized functions $\zeta(\sigma+it) /\zeta(\sigma)$ can not be characteristic functions for any $1/2 \le \sigma \le 1$.
\end{remark}

\subsection{Aim}

In Section 1.3, we have mentioned infinite divisibility of some known zeta distributions on $\R$.
Again, as in Proposition \ref{pro:Sato} $(iii)$, we note that infinitely divisible characteristic functions do not have zeros.
In zeta cases, this property can give us information of zeros of zeta functions which is one of the most important subject in number theory.
Historically, there exist many continuous probability distributions on $\rd$ and multiple zeta functions but no infinitely divisible zeta distributions on $\rd$ are introduced.
Here zeta distributions on $\rd$ mean that distributions which are discrete with infinitely many mass points that we usually do not see in the sense of explicitly written ones.

However, it is known to be difficult to define probability distributions on $\rd$ by a certain multivariable function as a characteristic function 
if it is not trivial to be so.
They can be defined when they are unsigned finite measures with total mass $1$.
The method how to check the signs of measures on $\rd$, we have the Bochner's theorem in common (see, e.g. \cite{S99}). 
As recent topics of zeta functions related to probability theory, including the property of infinite divisibility, 
some classical results of zeta functions are interpreted probabilistically in \cite{BY},
and the characteristic polynomial of a random unitary matrix is studied by a probabilistic approach in \cite{Yor}.
The purpose of our recent work is to establish zeta distributions on $\rd$ as new treatable discrete distributions on $\rd$ with infinitely many mass points and show properties of them including the relationship with number theory.
As a first generalization of zeta distributions, we have introduced a new multidimensional Shintani zeta function and corresponding zeta distributions on $\rd$ in \cite{AN11k} and revised in \cite{AN12s} with some new results.
By the series representation, non infinite divisibility of them can be seen but there are no methods to show their infinite divisibility.
As to show that, we have to treat the Euler products.

In this paper, a new generalization of the polynomial Euler product is introduced and some important examples are given in Section 2.
Our main results are in Section 3.
The necessary and sufficient conditions for those products to generate discrete compound Poisson characteristic functions are given and new classes of multidimensional compound Poisson zeta distributions are defined.
These results contain new methods of how to check when some multivariable functions can be characteristic functions, when probability distributions can be defined in other words, by applying the Baker's and Kronecker's approximation theorems which are well-known in number theory.
Still, it seems to be not well-studied even for $1$-dimensional case. 
So that we give some new important examples of $1$-dimensional zeta functions related to these new classes in Section 4.

\section{Multidimensional polynomial Euler Products}

In Section 1.4, we mentioned that our purpose of this paper is to establish a new zeta distributions on $\rd$ by Euler products as new treatable multidimensional
discrete infinitely divisible distributions with infinitely many mass points.
We consider the Euler product case even we have studied the multiple series case as in \cite{AN11k} and, afterwards, in \cite{AN12s} since there are two merits; 
(I) Euler products do not vanish in the region of absolute convergence (see, Proposition \ref{pro:Sato} $(iii)$).  
(II) It is easy to obtain the L\'evy--Khintchine representation (see, Theorems \ref{th:cfep}, \ref{th:cfep3} and \ref{th:main}). 

\subsection{Definition and properties}

Denote by $\Prime$ the set of all prime numbers.

\begin{definition}[Multidimensional polynomial Euler product, $Z_E(\vs)$]\label{def:EP}
Let $d,m\in\N$ and $\vs\in\mathbb{C}^d$.
For $-1 \le \alpha_l(p) \le 1$ and $\va_l \in {\mathbb{R}}^d$, $1\le l\le m$ and $p\in\Prime$,
we define multidimensional polynomial Euler product given by
\begin{equation}
Z_E (\vs) = \prod_p \prod_{l=1}^m \left( 1 - \alpha_l(p) p^{-\langle \va_l,\vs\rangle} \right)^{-1}.
\label{eq:def1}
\end{equation}
\end{definition}

\begin{remark}\label{rem:con}
We have $Z_E \left(\overline{\vs}\right)=\overline{Z_E (\vs)}$, where $\overline{\Vec{z}}$ is the complex conjugate of $\Vec{z}\in\mathbb{C}^d$.
\end{remark}
Then this product converges absolutely.

\begin{theorem}\label{th:EPc} 
The product \eqref{eq:def1} converges absolutely and has no zeros in the region $\min_{1\le l\le m}\Re \langle \va_l,\vs\rangle >1$. 
\end{theorem}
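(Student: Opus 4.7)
The plan is to reduce the statement to the classical fact that an infinite product $\prod_n(1-z_n)^{-1}$ of nonzero factors converges absolutely and to a nonzero limit whenever $\sum_n|z_n|<\infty$. Set $\sigma_0:=\min_{1\le l\le m}\Re\langle\va_l,\vs\rangle$, which by assumption satisfies $\sigma_0>1$.

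First I would bound a single factor. Since $|\alpha_l(p)|\le 1$ and $|p^{-\langle\va_l,\vs\rangle}|=p^{-\Re\langle\va_l,\vs\rangle}\le p^{-\sigma_0}$, for every prime $p\ge 2$ and every $l$ we have
$$
\bigl|\alpha_l(p)p^{-\langle\va_l,\vs\rangle}\bigr|\le p^{-\sigma_0}\le 2^{-\sigma_0}<1.
$$
In particular $1-\alpha_l(p)p^{-\langle\va_l,\vs\rangle}\ne 0$, so each reciprocal factor is well-defined and nonzero. Next I would sum these bounds over $p$ and $l$:
$$
\sum_p\sum_{l=1}^m\bigl|\alpha_l(p)p^{-\langle\va_l,\vs\rangle}\bigr|\le m\sum_p p^{-\sigma_0}\le m\sum_{n=2}^{\infty}n^{-\sigma_0}<\infty,
$$
where the last series converges because $\sigma_0>1$. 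This directly yields the absolute convergence of the product \eqref{eq:def1}.

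To see the product has no zeros, I would take the principal logarithm of each factor, legitimate because $|\alpha_l(p)p^{-\langle\va_l,\vs\rangle}|<1$, and expand
$$
\log\bigl(1-\alpha_l(p)p^{-\langle\va_l,\vs\rangle}\bigr)^{-1}=\sum_{r=1}^{\infty}\frac{\alpha_l(p)^r}{r}\,p^{-r\langle\va_l,\vs\rangle}.
$$
The double/triple sum is bounded in absolute value by
$$
\sum_p\sum_{l=1}^m\sum_{r=1}^{\infty}\frac{p^{-r\sigma_0}}{r}\le m\sum_p\bigl(-\log(1-p^{-\sigma_0})\bigr)<\infty,
$$
the outer sum being finite by comparison with $\sum_p p^{-\sigma_0}$. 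Thus $\log Z_E(\vs)$ equals a finite complex number, so $Z_E(\vs)=\exp(\log Z_E(\vs))\ne 0$.

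I do not anticipate a real obstacle here: the argument is the standard one for Dirichlet-series Euler products, and the multidimensional aspect enters only through replacing the scalar exponent $\sigma$ by the several real parts $\Re\langle\va_l,\vs\rangle$, which the uniform lower bound $\sigma_0>1$ handles. The only place to be careful is confirming that the polynomial factor (the index $l$) introduces nothing worse than a multiplicative constant $m$ in the majorant, which is immediate.
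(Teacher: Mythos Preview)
Your proof is correct and follows essentially the same approach as the paper: both set $\sigma_0=\min_l\Re\langle\va_l,\vs\rangle>1$, bound $|\alpha_l(p)p^{-\langle\va_l,\vs\rangle}|\le p^{-\sigma_0}$, and dominate the resulting sum by $m\sum_{n\ge2}n^{-\sigma_0}<\infty$. The only cosmetic difference is that the paper invokes a general infinite-product lemma (Proposition~\ref{pro:co5.9}) and then notes each factor is nonzero, whereas you argue directly via the absolute convergence of the logarithmic expansion; both routes are standard and equivalent here.
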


For the proof of this theorem, we use the following proposition. 
Let $G$ be an open subset of the complex plane and $H(G)$ be the collection of analytic functions on $G$. 

\begin{proposition}[see e.g.~{\cite[Theorem 5.9]{Conway}}]
Let $G$ be a region in ${\mathbb{C}}$ and let $\{f_n\}$ be a sequence in $H(G)$ such that no $f_n$ is identically zero. 
If $\sum_{n=1}^\infty (f_n(s)-1)$ converges absolutely and uniformly on any compact subset of $G$ then $\prod_{n=1}^\infty f_n(s)$ converges to an analytic function $f(s)$ in $H(G)$. 
If $z$ is a zero of $f$ then $z$ is a zero of only a finite number of the functions $f_n$, and the multiplicity of the zero of $f$ at $z$ is the sum of the multiplicities of the zeros of the functions $f_n$ at $z$. 
\label{pro:co5.9}
\end{proposition}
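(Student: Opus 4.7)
The plan is to reduce the infinite product to the exponential of an absolutely convergent sum of logarithms, and then to read off the zero information from the (finitely many) initial factors that survive the reduction. First I will fix an arbitrary compact set $K\subset G$; by the hypothesis, $\sum_{n=1}^{\infty}(f_n(s)-1)$ converges absolutely and uniformly on $K$, so I can select an index $N=N(K)$ such that $|f_n(s)-1|<1/2$ for every $s\in K$ and every $n\ge N$. For such $n$ the values $f_n(s)$ lie in the open disk $\{w\in\C:|w-1|<1/2\}$, which avoids the origin and the principal branch cut, so $\log f_n(s)$ is well-defined and analytic on a neighborhood of $K$.

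Next I will establish the analyticity of $f$. The elementary bound $|\log(1+w)|\le 2|w|$ for $|w|\le 1/2$ gives $|\log f_n(s)|\le 2|f_n(s)-1|$ on $K$ for all $n\ge N$, so
\[
g_N(s):=\sum_{n=N}^{\infty}\log f_n(s)
\]
will converge absolutely and uniformly on $K$. Being a uniform limit of analytic functions, $g_N$ is analytic, and because $\exp$ is uniformly continuous on bounded sets the partial products $\prod_{n=N}^{M}f_n(s)=\exp\sum_{n=N}^{M}\log f_n(s)$ will converge uniformly on $K$ to $\exp g_N(s)$; in particular this tail product is analytic and nowhere zero on $K$. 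Multiplying by the finite (hence analytic) product $P_{N-1}(s):=\prod_{n=1}^{N-1}f_n(s)$ then yields
\[
f(s)=P_{N-1}(s)\,\exp g_N(s)
\]
on a neighborhood of $K$, and since $K$ was arbitrary this will place $f$ in $H(G)$.

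For the zero statement, I will fix $z\in G$ with $f(z)=0$ and take $K$ to be a closed disk around $z$, with associated $N$. Because $\exp g_N(z)\ne 0$, the factorization forces $P_{N-1}(z)=0$, so $z$ is a zero of at least one of the finitely many $f_1,\dots,f_{N-1}$; meanwhile, the bound $|f_n(z)-1|<1/2$ for $n\ge N$ rules out $f_n(z)=0$ for those $n$. Hence only finitely many of the $f_n$ vanish at $z$. Since $\exp g_N$ is nowhere zero, the order of vanishing of $f$ at $z$ equals that of $P_{N-1}$, which in turn equals the sum of the orders of vanishing at $z$ of $f_1,\dots,f_{N-1}$; augmenting by the (zero) contributions from indices $n\ge N$ leaves the sum unchanged, which will deliver the multiplicity claim.

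The main obstacle is the need to choose a branch of the logarithm on each factor; this is why the proof must peel off finitely many initial terms before passing to logarithms. Once the remaining factors lie uniformly within distance $1/2$ of $1$, everything reduces to standard facts about normally convergent sums of analytic functions. The dependence of $N$ on the compact set $K$ will cause no trouble, since the factorization $f=P_{N-1}\cdot\exp g_N$ is only an intermediate device: the limit function $f$ is unambiguously defined on each $K$, and factorizations associated with overlapping compacts must agree on their intersections by uniqueness of analytic continuation.
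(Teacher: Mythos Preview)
Your argument is correct and is essentially the standard textbook proof of this result. Note, however, that the paper does not supply its own proof of this proposition: it is quoted verbatim from Conway's \emph{Functions of One Complex Variable~I} (Theorem~5.9 in Chapter~VII) and used as a black box in the proof of Theorem~\ref{th:EPc}. There is therefore nothing in the paper to compare your proof against; what you have written is, up to cosmetic differences, the proof Conway gives.
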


\begin{proof}[Proof of Theorem \ref{th:EPc}]
Put $v := \min_{1\le l\le m}\Re \langle \va_l,\vs\rangle$.
Then, by the assumption $v>1$ and $-1 \le \alpha_l(p) \le 1$ for any $p\in\Prime$ and $1\le l\le m$, we have
$$
\sum_{p} \bigl| \alpha_l(p) p^{-\langle \va_l,\vs\rangle} \bigr| \le \sum_{p} p^{-v} \le \sum_{n\ge 2} n^{-v}
\le \int_1^\infty x^{-v} dx < \infty .
$$
Thus $\sum_{p} \alpha_l(p) p^{-\langle \va_l,\vs\rangle}$ converges absolutely and uniformly on any compact subset of the region $\min_{1\le l\le m}\Re \langle \va_l,\vs\rangle$$ >1$. 
By Proposition \ref{pro:co5.9}, the product \eqref{eq:def1} converges absolutely in the region $\min_{1\le l \le m}\Re \langle \va_l,\vs\rangle >1$. 

We also have that $0<|1-\alpha_l(p) p^{-\langle \va_l,\vs\rangle} |^{-1}$ for any $\vs\in\mathbb{C}^d$, $p\in\Prime$ and $1\le l\le m$ , so that $\eqref{eq:def1}$ does not have zeros.
\end{proof}

Here and in the sequel, we define $\log Z_E (\vs)$ by the following Dirichlet series expansion
\begin{equation}\label{eq:9.19}
\log Z_E (\vs) := \sum_p \sum_{r=1}^\infty \sum_{l=1}^m \frac{1}{r} \alpha_l(p)^r p^{-r \langle \va_l,\vs\rangle}
\end{equation}
in the region of absolute convergence (see e.g.~\cite[(9.19)]{Steuding1}). This formula plays important role in the proof of Main theorem. 

\subsection{Examples of multidimensional polynomial Euler products}

Some simple examples of $Z_E(\vs)$ for $d=1$ are the following.

\begin{example}\label{ex:EP}
$(i)$ When $d=m=1$, $a=1$ and $\al(p)= -1$, then
$$
Z_E (s_1) = \prod_p \frac{1}{1 +p^{-s_1}} = \prod_p \frac{1 -p^{-s_1}}{1 -p^{-2s_1}}  = 
\frac{\zeta (2s_1)}{\zeta (s_1)}.
$$

\pn
$(ii)$ When $d=m=1$, $a=2$, $\al(p)=1$, or $d=1$, $m=2$, $a_1=a_2=1$, and $\al_1(p) = - \al_2(p) =1$, then
$$
Z_E (s_1) = \prod_p \frac{1}{1 -p^{-2s_1}}  = \zeta (2s_1) = \prod_p \frac{1}{(1 -p^{-s_1})(1 +p^{-s_1})}.
$$
\end{example}

Similarly, we have following examples for $d=2$ as a simple multidimensional case.

\begin{example}
\pn
$(iii)$ When $d=m=2$, $\va_1=(1,0)$, $\va_2=(1,1)$ and $\al_l(p)=1$, $l=1,2$, then
$$
Z_E (\vs)=\prod_p \frac{1}{1 -p^{-s_1}} \frac{1}{1 -p^{-(s_1+s_2)}}=\zeta (s_1)\zeta (s_1+s_2). 
$$
\pn
$(iv)$ When $d=m=2$, $\va_1=(1,0)$, $\va_2=(1,2)$, $\al_1(p)=1$ and $\al_2(p)=\chi(p)$, where $\chi (p)$ is a real Dirichlet character, then
$$
Z_E (\vs)=\prod_p \frac{1}{1 -p^{-s_1}} \frac{1}{1 -\chi(p)p^{-(s_1+2s_2)}}=\zeta(s_1)L(s_1+2s_2,\chi).
$$
\end{example}

We give further multidimensional examples and mention their behaviors related to probability theory in Section 3.

\section{Multidimensional compound Poisson zeta distributions}

In this section, we define multidimensional compound Poisson zeta distributions on $\rd$ generated by the multidimensional polynomial Euler products defined in the previous section. We put
$$
\vs:=\vsig +{\rm i}\vt, \q\Vec{\sigma}, \vt\in\rd,
$$
and, for $\vsig$ satisfying $\min_{1\le l\le m}\Re \langle \va_l,\vs\rangle >1$, a normalized function
$$
f_{\vsig}\left(\vt\,\right):=\frac{Z_E\left(\vsig +{\rm i}\vt\,\right)}{Z_E(\vsig)}.
$$ 

\subsection{Multidimensional \textit{m}-tuple compound Poisson zeta distribution}

We have the following.

\begin{theorem}\label{th:cfep}
Let $\va_{1}= \cdots = \va_{m} =:\va$ and $\alpha_l(p)=0$ or \,$\pm1$ for $1\le l\le m$ and $p\in\Prime$ in \eqref{eq:def1}.
Then $f_{\vsig}$ is a characteristic function if and only if $\sum_{l=1}^m \alpha_l(p) \ge 0$ for all $p\in\Prime$.
Moreover, when $\sum_{l=1}^m \alpha_l(p) \ge 0$ for all $p\in\Prime$, $f_{\vsig}$ is a compound Poisson characteristic function with its finite L\'evy measure $N^{(\va)}_{\vsig}$ on $\rd$ given by 
\begin{align}\label{eq:lm1}
N^{(\va)}_{\vsig} (dx) = \sum_p \sum_{r=1}^{\infty} \sum_{l=1}^m \frac{1}{r}
\alpha_l(p)^r p^{-r\langle\va,\vsig\rangle} \delta_{\log p^r \va} (dx).
\end{align}
\end{theorem}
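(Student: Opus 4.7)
The plan is to treat the two directions separately. Both rest on converting the product representation into a Dirichlet-series picture; sufficiency uses the logarithmic series \eqref{eq:9.19} to produce the compound Poisson structure, while necessity uses the direct Dirichlet expansion of $Z_E$ together with uniqueness of the Fourier representation.

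For sufficiency I would substitute $\vs=\vsig+i\vt$ into \eqref{eq:9.19} and subtract $\log Z_E(\vsig)$ to obtain
$$
\log f_{\vsig}(\vt)=\sum_p\sum_{r=1}^{\infty}\sum_{l=1}^{m}\frac{\alpha_l(p)^r p^{-r\langle\va,\vsig\rangle}}{r}\bigl(e^{-ir\log p\,\langle\va,\vt\rangle}-1\bigr),
$$
which, since $\va_l=\va$ for every $l$, rewrites as $\int_{\rd}(e^{-i\langle\vt,x\rangle}-1)\,N^{(\va)}_{\vsig}(dx)$ with the measure given in \eqref{eq:lm1}. Grouping terms over $l$ at fixed $(p,r)$, the coefficient of $\delta_{r\log p\cdot\va}$ is proportional to $\sum_l\alpha_l(p)^r$; for even $r$ this equals $\#\{l:\alpha_l(p)\ne 0\}\ge 0$ automatically, and for odd $r$ it equals $\sum_l\alpha_l(p)$, which is $\ge 0$ precisely by the assumed condition. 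Finite total mass follows from absolute convergence of $Z_E$ (Theorem~\ref{th:EPc}). Matching the result to the compound Poisson form of Example~\ref{examples}, with the sign convention of Proposition~\ref{pro:RD}, then identifies $f_{\vsig}$ as the characteristic function of a compound Poisson distribution with L\'evy measure $N^{(\va)}_{\vsig}$.

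For necessity I would instead expand each Euler factor as a power series: setting $n_+(p):=\#\{l:\alpha_l(p)=1\}$ and $n_-(p):=\#\{l:\alpha_l(p)=-1\}$, one has $\prod_l(1-\alpha_l(p)x)^{-1}=(1-x)^{-n_+(p)}(1+x)^{-n_-(p)}=\sum_{k\ge 0}c_p(k)x^k$ with $c_p(1)=\sum_l\alpha_l(p)$. Multiplying over $p$ and collecting gives
$$
Z_E(\vs)=\sum_{n=1}^{\infty}C(n)\,n^{-\langle\va,\vs\rangle},\qquad C(n)=\prod_p c_p(k_p)\text{ for }n=\prod_p p^{k_p},
$$
with absolute convergence for $\langle\va,\vsig\rangle>1$ (majorised by $\zeta(\langle\va,\vsig\rangle)^m$). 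Since the assumption forces $\va\ne 0$, the atoms $-\log n\cdot\va$ are pairwise distinct, and $Z_E(\vsig)>0$ because every Euler factor is positive real. Setting $a_n:=C(n)n^{-\langle\va,\vsig\rangle}/Z_E(\vsig)$ yields $f_{\vsig}(\vt)=\sum_n a_n\,e^{i\langle\vt,-\log n\cdot\va\rangle}$, expressing $f_{\vsig}$ as the Fourier transform of the finite signed measure $\sum_n a_n\,\delta_{-\log n\cdot\va}$. If $f_{\vsig}$ is a characteristic function, Fourier injectivity forces this signed measure to coincide with the underlying probability measure, hence $a_n\ge 0$ for every $n$; taking $n=p_0$ gives $a_{p_0}\propto\sum_l\alpha_l(p_0)$, so $\sum_l\alpha_l(p_0)\ge 0$ as required.

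The main obstacle is the necessity step, specifically verifying that $\sum_n|a_n|<\infty$ so that Fourier uniqueness legitimately applies, and carrying out the combinatorial sign computation $c_p(1)=\sum_l\alpha_l(p)$ (more generally, that $c_p(k)\ge 0$ for all $k$ precisely when $n_+(p)\ge n_-(p)$, obtained from the factorisation $(1-x)^{-n_+}(1+x)^{-n_-}=(1-x)^{-(n_+-n_-)}(1-x^2)^{-n_-}$ when $n_+\ge n_-$). An alternative route, more in the spirit of the Kronecker-type methods advertised in the paper, is to show $|f_{\vsig}(\vt)|>1$ directly when $\sum_l\alpha_l(p_0)<0$: choose $\vt$ proportional to $\va$ so that $\langle\va,\vt\rangle=t$, use $\Q$-linear independence of $\{\log p\}_p$ to find $t$ with $t\log p_0\equiv\pi\pmod{2\pi}$ and $t\log p\equiv 0\pmod{2\pi}$ for all other primes below a suitable cutoff, and control the tail by absolute convergence of the product; this contradicts Proposition~\ref{pro:Sato}(i).
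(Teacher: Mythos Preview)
Your sufficiency argument coincides with the paper's: both compute $\log f_{\vsig}$ from \eqref{eq:9.19}, observe that under the $0,\pm1$ hypothesis $\sum_l\alpha_l(p)^r\ge0$ for all $r$ is equivalent to $\sum_l\alpha_l(p)\ge0$, and read off the compound Poisson form with finite L\'evy measure.

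For necessity your primary route is genuinely different from the paper's, and it is correct. The paper proves Lemma~\ref{lm:q} via Kronecker's approximation theorem (Proposition~\ref{pro:kroap1}): it selects $T_0$ so that $p^{{\rm i}T_0}\approx1$ for primes with $\sum_l\alpha_l(p)\ge0$ and $q^{{\rm i}T_0}\approx-1$ for primes with $\sum_l\alpha_l(q)<0$ (below a cutoff), then estimates $\log|f_{\vsig}(\vt_0)|$ term by term, splitting odd and even powers of $r$, to force it strictly positive. You instead expand the Euler product as an absolutely convergent Dirichlet series, recognise $f_{\vsig}$ as the Fourier transform of the finite signed measure $\sum_n a_n\delta_{-\log n\cdot\va}$, and invoke injectivity of the Fourier transform on finite signed measures: if $f_{\vsig}$ is a characteristic function then this signed measure must be the underlying probability law, so every $a_n\ge0$, in particular $a_p\propto\sum_l\alpha_l(p)\ge0$. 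This is shorter and avoids Kronecker altogether, but it is non-constructive: it shows $f_{\vsig}$ is not a characteristic function without exhibiting a point where $|f_{\vsig}|>1$. The paper's method yields that sharper conclusion (Lemma~\ref{lm:q}), and more importantly the Kronecker machinery is what the paper later upgrades via Baker's theorem to handle genuinely distinct $\va_l$ in Theorems~\ref{th:cfep3} and~\ref{th:main}; your Dirichlet-series argument would there require the atoms $-\sum_l(\log n_l)\va_l$ to be pairwise distinct, which is a much subtler issue. Your sketched ``alternative route'' is essentially the paper's Lemma~\ref{lm:q}.
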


For the proof of Theorem \ref{th:cfep}, we use the following two lemmas.

\begin{lemma}
Suppose $\alpha_{l}= 0,\pm 1$ for $1\le l\le m$. 
Then $\sum_{l=1}^m \alpha_{l}^r \ge 0$ for any $r \ge 2$ if and only if $\sum_{l=1}^m \alpha_{l}\ge 0$. 
\end{lemma}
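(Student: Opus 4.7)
The plan is to reduce the statement to a counting identity by classifying the coefficients. Let $P := \#\{l : \alpha_l = 1\}$ and $N := \#\{l : \alpha_l = -1\}$; the remaining indices contribute zero to every power sum. Then $\sum_{l=1}^m \alpha_l = P - N$, and for $r \ge 1$ the $r$-th power sum splits cleanly by parity:
\begin{equation*}
\sum_{l=1}^m \alpha_l^r = P + (-1)^r N.
\end{equation*}
This is the only computation needed.

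From here I would argue in two halves. For the forward direction, specialize the displayed identity to any odd $r \ge 3$ (which exists since $r$ ranges over all integers $\ge 2$); then the hypothesis $\sum_l \alpha_l^r \ge 0$ becomes $P - N \ge 0$, which is exactly $\sum_l \alpha_l \ge 0$. For the converse, assume $P - N \ge 0$ and check both parities of $r \ge 2$: if $r$ is even then $\sum_l \alpha_l^r = P + N \ge 0$ trivially, and if $r$ is odd then $\sum_l \alpha_l^r = P - N \ge 0$ by hypothesis. Together these give $\sum_l \alpha_l^r \ge 0$ for all $r \ge 2$.

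There is no real obstacle here — the entire argument is the observation that for $\alpha_l \in \{0,\pm 1\}$ the power $\alpha_l^r$ depends only on the parity of $r$ (when $\alpha_l \ne 0$), so the infinite family of inequalities indexed by $r \ge 2$ collapses to the single condition $P \ge N$ coming from the odd exponents, while the even exponents are automatic. The only thing to be slightly careful about is that one must use some odd $r$ in the range $r \ge 2$, i.e.\ $r \ge 3$, rather than $r = 1$, so that the hypothesis of the lemma actually applies.
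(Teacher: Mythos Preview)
Your proof is correct and follows essentially the same approach as the paper: both arguments rest on the observation that for $\alpha_l \in \{0,\pm 1\}$ the value of $\alpha_l^r$ depends only on the parity of $r$, so that odd power sums coincide with $\sum_l \alpha_l$ while even power sums are automatically nonnegative. Your explicit counting via $P$ and $N$ makes the ``obvious'' direction in the paper's proof more transparent, but the underlying idea is identical.
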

\begin{proof}
It is obvious that we have $\sum_{l=1}^m \alpha_{l}^r \ge 0$ for any $r \ge 2$ if $\sum_{l=1}^m \alpha_{l}\ge 0$. 

Conversely, let $k\in\N$ and suppose $\sum_{l=1}^m \alpha_{l}^r \ge 0$ for any $r \ge 2$. 
When $r=2k+1$, we have $\sum_{l=1}^m \alpha_l(p)=\sum_{l=1}^m \alpha_{l}^{2k+1}\ge 0$ since $\alpha_{l}=\alpha_{l}^{2k+1}$.
Thus we have $\sum_{l=1}^m \alpha_{l}\ge 0$ if $\sum_{l=1}^m \alpha_{l}^r \ge 0$ for any $r \ge 2$.
\end{proof}

\begin{remark}
If $m=2$, we can remove $|\alpha_{l}|=1$ since $|\alpha_1| > |\alpha_2|$ if and only if $|\alpha_1|^r > |\alpha_2|^r$. 
We can not do so when $m \ge 3$. 
As a counter example, we have the case $\alpha_1=\alpha_2=1/3$ and $\alpha_3=-2/3$.
\end{remark}

\begin{lemma}\label{lm:q}
If there exists a set of prime numbers $q$ satisfying $\sum_{l=1}^m \alpha_l(q) < 0$, then there exists $\vt_0\in\rd$ such that $|f_{\vsig}(\vt_0)|>1$. 
\label{lem:key}
\end{lemma}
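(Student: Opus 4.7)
The plan is to exhibit $\vt_0 \in \rd$ at which $\log |f_{\vsig}(\vt_0)| > 0$. Starting from the Dirichlet series \eqref{eq:9.19} and exploiting $\va_1 = \cdots = \va_m = \va$, set $\tau := \langle \va, \vt\rangle$ and $v := \langle \va, \vsig\rangle > 1$. Taking the real part gives
\begin{equation*}
\log |f_{\vsig}(\vt)| = \sum_p g_p(\tau), \qquad g_p(\tau) := \sum_{r=1}^{\infty} \frac{\sum_{l=1}^m \alpha_l(p)^r}{r}\, p^{-rv}\bigl(\cos(r\tau\log p) - 1\bigr),
\end{equation*}
which converges absolutely because $v > 1$. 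The strategy is to align $\tau$ with the bad prime $q$ so that its contribution is strongly positive while keeping the other primes from spoiling the sign.

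At any $\tau_*$ with $\tau_* \log q \equiv \pi \pmod{2\pi}$ one has $\cos(r\tau_*\log q) - 1 = (-1)^r - 1$, vanishing on even $r$. Since each $\alpha_l(q)$ is $0$ or $\pm 1$, $\alpha_l(q)^r = \alpha_l(q)$ for odd $r$, and using $\sum_{r\text{ odd}} x^r/r = \tfrac{1}{2}\log\frac{1+x}{1-x}$ for $|x|<1$ gives
\begin{equation*}
g_q(\tau_*) = -2\,S_1(q) \sum_{r\text{ odd}} \frac{q^{-rv}}{r} = -S_1(q)\log\frac{1+q^{-v}}{1-q^{-v}} =: C_0 > 0,
\end{equation*}
where $S_1(q) := \sum_{l=1}^m \alpha_l(q) \le -1$ is a negative integer by hypothesis. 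This fixes the target positive contribution.

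The tail is handled by the crude uniform bound $|g_p(\tau)| \le -2m\log(1 - p^{-v}) = O(m\,p^{-v})$, valid for every prime $p$ and every $\tau\in\R$. Since $v > 1$, one can choose a finite $P$ with $\sum_{p > P} |g_p(\tau)| < C_0/4$ uniformly in $\tau$. The real numbers $\log q$ and $\log p$ for the primes $p \le P$, $p \ne q$, are $\Q$-linearly independent by unique factorization in $\Z$, so the continuous form of Kronecker's approximation theorem furnishes $\tau \in \R$ making $\tau\log q$ arbitrarily close to $\pi \pmod{2\pi}$ and each $\tau\log p$ arbitrarily close to $0 \pmod{2\pi}$. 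Each $g_p$ is continuous in $\tau\log p \pmod{2\pi}$ and vanishes at $0$, so by taking the approximation sharp enough one secures simultaneously $g_q(\tau) > C_0/2$ and $\sum_{p\le P,\,p\ne q} |g_p(\tau)| < C_0/4$. Combining the three pieces yields $\log|f_{\vsig}(\vt)| > C_0/4 > 0$. Since $v > 1$ forces $\va \ne 0$, one can pick $\vt_0 \in \rd$ with $\langle \va, \vt_0\rangle = \tau$, whence $|f_{\vsig}(\vt_0)| > 1$.

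The main obstacle is the simultaneous $\tau$-control of the contributions of infinitely many primes; this is what the tail bound circumvents by reducing the problem to a finite Kronecker approximation on logarithms of distinct primes, where $\Q$-linear independence makes the density argument available.
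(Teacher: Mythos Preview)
Your proof is correct and shares the paper's core mechanism: Kronecker's approximation theorem applied to the rationally independent numbers $\log p$ to align the phase $\tau\log q$ of a bad prime near $\pi$ while keeping $\tau\log p$ near $0$ for the remaining small primes. The execution, however, is different and somewhat cleaner. The paper truncates simultaneously in $r$ and $p$ at a level $2K$, treats \emph{all} bad primes in $\Prime^-_K$ at once, and then controls $p^{r\mathrm{i}T_0}-1$, $q^{(2k-1)\mathrm{i}T_0}+1$, $q^{2k\mathrm{i}T_0}-1$ via the explicit factorizations $x^r-1=(x-1)(x^{r-1}+\cdots+1)$ and $x^{2k-1}+1=(x+1)(x^{2k-2}-\cdots+1)$, producing quantitative $O(K\varepsilon')$ error terms. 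You instead isolate a single bad prime, package each prime's contribution as a function $g_p$ that is $2\pi$-periodic and continuous in $\tau\log p$ (by the Weierstrass $M$-test), and invoke continuity rather than factorization identities; this lets you avoid the $r$-truncation entirely. What your approach buys is brevity and conceptual clarity; what the paper's buys is an explicit, fully quantitative estimate.

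One cosmetic slip: with your stated bounds $g_q(\tau)>C_0/2$, $\sum_{p\le P,\,p\ne q}|g_p(\tau)|<C_0/4$, and tail $<C_0/4$, the combination gives $\log|f_{\vsig}(\vt_0)|>0$, not $>C_0/4$. This does not affect the argument (all three inequalities are strict, or simply sharpen the finite-prime bound to $C_0/8$), but you should correct the final constant. Also make explicit that $P\ge q$, so that $q$ sits among the primes $\le P$ handled individually and not in the tail.
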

For the proof of this lemma, we use linear independence of real numbers and the Kronecker's approximation theorem.

It is called that real numbers $\theta_1,\ldots,\theta_n$ are {\it linearly independent over the rationals} if $\sum_{k=1}^n c_k \theta_k=0$ with rational multipliers $c_1,\ldots,c_n$ implies $c_1 = \cdots = c_n =0$. 
Put
\begin{equation}
\theta_k := \frac{\log p_k}{2\pi}, \qq 1 \le k \le n,
\label{eq:deftheta}
\end{equation}
where $p_1, \ldots, p_n$ are the first $n$ primes. 
Then $\theta_k$ are linearly independent over the rationals and it can be shown by this way. 
When we suppose $\sum_{k=1}^n c_k \theta_k =0$, namely, $\log (p_1^{c_1} \cdots p_n^{c_n}) =0$, it implies that $p_1^{c_1} \cdots p_n^{c_n}=1$. 
Hence we obtain $c_1 = \cdots = c_n =0$ by the fundamental theorem of arithmetic (or the unique-prime-factorization theorem). 

The following proposition is called the (first form of) Kronecker's approximation theorem. 
\begin{proposition}[{\cite[Theorem 7.9]{Apo2}}]\label{pro:kroap1}
If $\phi_1 , \ldots ,\phi_n$ are arbitrary real numbers, if real numbers $\theta_1,\ldots,\theta_n$ are linearly independent over the rationals, and if $\varepsilon>0$ is arbitrary, then there exists a real number $t$ and integers $h_1, \ldots, h_n$ such that
$$
|t\theta_k - h_k - \phi_k| < \varepsilon, \qq 1 \le k \le n. 
$$
\end{proposition}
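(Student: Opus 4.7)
The plan is to reformulate the theorem as a density statement on the $n$-dimensional torus and then prove that density via Weyl's equidistribution criterion. Let $\mathbb{T}^n := (\R/\Z)^n$ and define the one-parameter curve $\Ph:\R\to\mathbb{T}^n$ by $\Ph(t):=(t\theta_1,\ldots,t\theta_n)\bmod \Z^n$. The conclusion ``$|t\theta_k-h_k-\phi_k|<\varepsilon$ for some integers $h_k$ and all $k$'' is exactly the statement that $\Ph(t)$ lies in the open $\varepsilon$-cube around $(\phi_1,\ldots,\phi_n)\bmod \Z^n$. Hence it suffices to prove that $\Ph(\R)$ is dense in $\mathbb{T}^n$.

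For any nonzero $\vec m=(m_1,\ldots,m_n)\in\Z^n\setminus\{0\}$, the linear independence hypothesis forces $\ld_{\vec m}:=\sum_{k=1}^n m_k\theta_k\ne 0$, whence a direct computation gives
$$\frac{1}{T}\int_0^T e^{2\pi{\rm i}t\ld_{\vec m}}\,dt=\frac{e^{2\pi{\rm i}T\ld_{\vec m}}-1}{2\pi{\rm i}T\ld_{\vec m}}\longrightarrow 0\qq\text{as}\qq T\to\infty,$$
while for $\vec m=0$ the average is identically $1$. These limits match exactly the Fourier coefficients of the normalized Haar measure $\mu$ on $\mathbb{T}^n$. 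Since the characters $x\mapsto e^{2\pi{\rm i}\la\vec m,x\ra}$ span a conjugation-closed, point-separating subalgebra of $C(\mathbb{T}^n)$ containing the constants, Stone--Weierstrass makes this subalgebra uniformly dense; combined with the trivial bound $|T^{-1}\int_0^T f\circ\Ph\,dt|\le\|f\|_\infty$, a standard three-$\varepsilon$ argument upgrades the pointwise convergence on characters to
$$\lim_{T\to\infty}\frac{1}{T}\int_0^T f\bigl(\Ph(t)\bigr)\,dt=\int_{\mathbb{T}^n}f\,d\mu\qq\text{for every }f\in C(\mathbb{T}^n).$$

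Density of $\Ph(\R)$ in $\mathbb{T}^n$ then follows by contradiction: if an open $\varepsilon$-cube $U$ around $(\phi_1,\ldots,\phi_n)\bmod\Z^n$ were disjoint from $\Ph(\R)$, choose a nonnegative $f\in C(\mathbb{T}^n)$ supported in $U$ with $\int f\,d\mu>0$; the time average on the left is identically zero while the Haar integral on the right is strictly positive, a contradiction. The only genuinely number-theoretic ingredient is that $\ld_{\vec m}\ne 0$ whenever $\vec m\ne 0$, which is immediate from the linear independence of $\theta_1,\ldots,\theta_n$ over $\Q$. I expect the main technical step to be the Stone--Weierstrass passage from characters to arbitrary continuous test functions; this is standard, so I do not anticipate a real obstacle, and the rest of the argument (the explicit evaluation of the Weyl integral, and the density-from-equidistribution contradiction) is essentially mechanical.
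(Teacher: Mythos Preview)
Your proof is correct. The argument via Weyl equidistribution on $\mathbb{T}^n$ is a standard and clean route to Kronecker's theorem in the continuous-parameter form stated here: the only arithmetic input is that $\sum_k m_k\theta_k\ne 0$ for nonzero $\vec m\in\Z^n$, which is precisely the linear-independence hypothesis, and the rest (explicit Weyl integral, Stone--Weierstrass, density from equidistribution) is routine.

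However, there is no comparison to make with the paper: the paper does not prove this proposition at all. It is stated as a citation to \cite[Theorem 7.9]{Apo2} and used as a black box in the proofs of Lemmas \ref{lem:key} and \ref{lm:ln}. So your contribution is a self-contained proof of an ingredient the authors simply import. For what it is worth, Apostol's own proof in \cite{Apo2} is more elementary and does not pass through equidistribution or Stone--Weierstrass; your approach is shorter once those tools are granted, and has the side benefit of actually yielding equidistribution (a stronger conclusion than mere density), though the paper only ever uses density.
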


\begin{proof}[Proof of Lemma \ref{lem:key}]
Recall that $\log Z_E (\vs)$ is defined by (\ref{eq:9.19}). 
Denote by $\Prime^-$ the set of prime numbers $q$ satisfying $\sum_{l=1}^m \alpha_l(q) < 0$ and $\Prime^+:=\Prime\backslash\Prime^-$, and define a vector-valued function $D(\vt)$, $\vt\in\rd$, as follows:
\begin{equation*}
\begin{split}
D(\vt):=& \log \left| \frac{Z_E (\vsig + {\rm{i}}\vt)}{Z_E (\vsig)} \right| = 
\frac{1}{2} \log \frac{Z_E (\vsig + {\rm{i}}\vt)}{Z_E (\vsig)} \frac{Z_E (\vsig - {\rm{i}}\vt)}{Z_E (\vsig)}\\
= &\frac{1}{2} \sum_p \sum_{r=1}^{\infty} \sum_{l=1}^m \frac{1}{r} 
\left( \alpha_l(p)^r p^{-r\langle\va,\vsig\rangle} 
\bigl(p^{r\langle\va,{\rm{i}}\vt\rangle} + p^{-r\langle\va,{\rm{i}}\vt\rangle} -2 \bigr)\right) .
\end{split}
\end{equation*}
Let $K\in\N$, $\sum_{r,p>2K}'$ be a sum taken over $r>2K$ or $p>2K$, $\Prime^+_K := \{ p\in\Prime^+ : 2 \le p \le 2K \}$ and $\Prime^-_K := \{ q\in\Prime^- : 2 \le q \le 2K \}$. 
For any $\ep >0$, we can see that there exists an integer $K$ such that $|2\sum_{r,p>2K}' \sum_{l=1}^m r^{-1} \alpha_l(p)^rp^{-r\langle\va,\vsig\rangle}|< \varepsilon$ and $\Prime^-_K \ne \emptyset$ by the absolute convergence of $Z_E(\vs)$ (see, Theorem \ref{th:EPc}).
In the view of $p_k^{{\rm{i}}t}= e^{{\rm{i}}t\log p_k} = e^{2\pi {\rm{i}}t\theta_k}$, \eqref{eq:deftheta} and by Proposition \ref{pro:kroap1}, for any $\ep '>0$ independent of $\ep$ and $K$, there exists $T_0:= \langle\va,\vt_0\rangle$, $\vt_0\in\rd$, such that
$$
|p^{{\rm{i}}T_0}-1| < \varepsilon', \q p \in \Prime^+_K, \q \mbox{and} \q 
|q^{{\rm{i}}T_0}+1| < \varepsilon', \q q \in \Prime^-_K.
$$
By the factorization $x^r-1 = (x-1)(x^{r-1}+\cdots+1)$, for any $1 \le r \le 2K$ and $p\in\Prime^+_K$, we have $|p^{r{\rm{i}}T_0}-1| < r\varepsilon' \le 2K\varepsilon'$. 
Similarly, by the factorization $x^{2k-1}+1 = (x+1)(x^{2k-2}-x^{2k-3}+\cdots+1)$ when $r=2k-1\in 2\N -1$ with $k\le K$, one has $|q^{(2k-1){\rm{i}}T_0}+1| < (2k-1)\varepsilon' \le 2K\varepsilon'$ for any $q\in\Prime^-_K$. 
Also, by the factorization $x^{2k}-1 = (x+1)(x-1)(x^{2k-2}+x^{2k-4}+\cdots+1)$ when $r=2k\in 2\N$ with $k\le K$, it holds that $|q^{2k{\rm{i}}T_0}-1| < 2k \varepsilon' \le 2K\varepsilon'$ for any $q\in\Prime^-_K$.
Hence there exits $T_0\in\R$ such that
\begin{equation*}
\begin{split}
-4K\varepsilon' < p^{r{\rm{i}}T_0} + p^{-r{\rm{i}}T_0} -2 \le 0, \q
&p\in\Prime^+_K, \,\, 1 \le r \le 2K ,\\
-4 -4K\varepsilon' < q^{(2k-1){\rm{i}}T_0} + q^{-(2k-1){\rm{i}}T_0} - 2 < -4 + 4K\varepsilon' , \q
&q\in\Prime^-_K, \,\, 1 \le k \le K,\\ 
-4K\varepsilon' \le q^{2k{\rm{i}}T_0} + q^{-2k{\rm{i}}T_0} - 2 \le 0, \q
&q\in\Prime^-_K, \,\, 1 \le k \le K.
\end{split}
\end{equation*}
By $\sum_{l=1}^m \alpha_l(p) \ge 0$, $\sum_{l=1}^m \alpha_l(q)^{2k-1} <0$ and $\sum_{l=1}^m \alpha_l(q)^{2k} >0$, for any  $p \in\Prime^+_K$ and $q\in\Prime^-_K$, we have
\begin{align}
C_1 := \sum_{p \in \Prime^+_K} \sum_{r=1}^{2K} \sum_{l=1}^m 
\frac{1}{r} \alpha_l(p)^r p^{-r\langle\va,\vsig\rangle}
&\ge 0 , \label{eq:lem34;1}\\
C_2 := \sum_{q \in \Prime^-_K}\sum_{k=1}^{K} \sum_{l=1}^m 
\frac{1}{2k} \alpha_l(q)^{2k} q^{-2k\langle\va,\vsig\rangle} 
&\ge 0 , \label{eq:lem34;2}\\
C_3 := \sum_{q \in \Prime^-_K}\sum_{k=1}^{K} \sum_{l=1}^m \frac{1}{2k-1} \alpha_l(q)^{2k-1} q^{-(2k-1)\langle\va,\vsig\rangle} &< 0 \label{eq:lem34;3}.
\end{align}
Note that if there do not exist any $p \in \Prime^+_K$, we regard the summation in (\ref{eq:lem34;1}) is $0$. Therefore we obtain
\begin{equation*}
\begin{split}
D(\vt_0) >&-\ep -2K\varepsilon' \sum_{p \in \Prime^+_K} \sum_{r=1}^{2K} \sum_{l=1}^m 
\frac{1}{r} \alpha_l(p)^r p^{-r\langle\va,\vsig\rangle} -2K\varepsilon' \sum_{q \in \Prime^-_K}\sum_{k=1}^{K} \sum_{l=1}^m \frac{1}{2k} \alpha_l(q)^{2k} q^{-2k\langle\va,\vsig\rangle} \\
& + (2K\varepsilon' -2)\sum_{q \in \Prime^-_K}\sum_{k=1}^{K} \sum_{l=1}^m 
\frac{1}{2k-1} \alpha_l(q)^{2k-1} q^{-(2k-1)\langle\va,\vsig\rangle} \\
=& -\ep -2K\varepsilon' (C_1 + C_2 -C_3) -2 C_3.
\end{split}
\end{equation*}
Suppose $\varepsilon$ is sufficiently small and $\varepsilon'$ such that $KC'\varepsilon' < \varepsilon$.
Then we have $D(\vt_0) >0$ by (\ref{eq:lem34;1}), (\ref{eq:lem34;2}) and (\ref{eq:lem34;3}). This completes the proof.
\end{proof}

\begin{proof}[Proof of Theorem \ref{th:cfep}]
By Proposition \ref{pro:Sato} $(i)$ and Lemma \ref{lm:q}, if there exists a set of prime numbers $q$ satisfying $\sum_{l=1}^m\alpha_l(q)<0$ then $f_{\vsig}$ is not a characteristic function.
Thus we only have to show that $f_{\vsig}$ is a compound Poisson characteristic function with a finite L\'evy measure $N^{(\va)}_{\vsig}$ on $\rd$ given in $\eqref{eq:lm1}$ if $\sum_{l=1}^m \alpha_l(p) \ge 0$ for all $p\in\Prime$.  
It is easy to see that $N^{(\va)}_{\vsig}$ is a measure on $\rd$ since $N^{(\va)}_{\vsig}(x)\ge 0$ for every $x\in\rd$ when $\sum_{l=1}^m \alpha_l(p) \ge 0$ for all $p\in\Prime$.
Now put $v := \langle \va,\vsig\rangle>1$. 
Then $\zeta (v)$ is a positive constant (see, Section 1.2) and note that $\alpha_l(p)=0,\pm1$ for $1\le l\le m$ and $p\in\Prime$.
So that we have
\begin{equation*}
\begin{split}
N^{(\va)}_{\vsig} (\rd) =& \int_{\rd}\sum_p \sum_{r=1}^{\infty} \sum_{l=1}^m \frac{1}{r}
\alpha_l(p)^r p^{-r\langle\va,\vsig\rangle} \delta_{\log p^r \va} (dx)
= \sum_p \sum_{r=1}^{\infty} \sum_{l=1}^m \frac{1}{r} \alpha_l(p)^r p^{-r\langle\va,\vsig\rangle} \\ 
\le &\sum_p \sum_{r=1}^{\infty} \sum_{l=1}^m \frac{1}{r} p^{-r\langle\va,\vsig\rangle} \le 
m\sum_p \sum_{r=1}^{\infty} p^{-rv}
 \le m \sum_{n=2}^{\infty} \sum_{r=1}^{\infty} n^{-rv} \\
=& \, m\sum_{n=2}^{\infty} \frac{n^{-v}}{1-n^{-v}}
\le 2m \sum_{n=2}^{\infty} n^{-v} = 2m \left(\zeta (v) -1\right)<\infty.
\end{split}
\end{equation*}
Thus, $N^{(\va)}_{\vsig}$ is a finite measure on $\rd$.

By Theorem \ref{th:EPc}, for $\vt\in\rd$, the function $f_{\vsig}(\vt)$ converges when $\langle \va,\vsig\rangle >1$.
Then, we have 
\begin{align}
\log f_{\vsig}(\vt)=& \log \frac{Z_E (\vsig + {\rm i}\vt)}{Z_E (\vsig)} 
= \sum_p \sum_{r=1}^{\infty} \sum_{l=1}^m \frac{1}{r}\alpha_l(p)^r p^{-r\langle\va,\vsig\rangle} 
\bigl(p^{-r\langle\va,{\rm i}\vt\rangle} -1\bigr)\nonumber\\ 
= & \sum_p \sum_{r=1}^{\infty} \sum_{l=1}^m \frac{1}{r} 
\alpha_l(p)^r p^{-r\langle\va,\vsig\rangle} \bigl(e^{-r\langle\va,{\rm i}\vt\rangle \log p} -1\bigr)\nonumber\\ 
= &\int_{\rd} (e^{-\langle{\rm i}\vt,x\rangle}-1) \sum_p \sum_{r=1}^{\infty} \sum_{l=1}^m \frac{1}{r}
\alpha_l(p)^r p^{-r\langle\va,\vsig\rangle} \delta_{\log p^r \va} (dx)\nonumber\\
=&\int_{\rd} (e^{-\langle{\rm i}\vt,x\rangle}-1)N^{(\va)}_{\vsig}(dx).\label{eq:chf1}
\end{align}

It is also easy to see that the measure $N^{(\va)}_{\vsig}$ satisfies $\int_{|x|<1}|x|N^{(\va)}_{\vsig}(dx)\le N^{(\va)}_{\vsig}(\rd)<\infty$.
Thus $f_{\vsig}$ is an infinitely divisible characteristic function and $N^{(\va)}_{\vsig}$ is the L\'evy measure of $f_{\vsig}$ by Propositions \ref{pro:LK1} and \ref{pro:LK2}.
Regarding $c=N^{(\va)}_{\vsig}(\rd)$ and $\rho (dx) =c^{-1}N^{(\va)}_{\vsig}(dx)$ in \eqref{CPcf} of Example \ref{examples}, we can see that $\eqref{eq:chf1}$ is of the form of a compound Poisson characteristic function. 
Hence $f_{\vsig}$ is a compound Poisson characteristic function with a finite L\'evy measure $N^{(\va)}_{\vsig}$ on $\rd$. 
This completes the proof.

\end{proof}

Now we define a multidimensional zeta distribution generated by $\mZ_E$.

\begin{definition}[Multidimensional \textit{m}-tuple compound Poisson zeta distribution]
Let $\va_{1}= \cdots = \va_{m} =:\va$ and $\alpha_l(p)=0,\pm1$, for $1\le l\le m$ and $p\in\Prime$, with $\sum_{l=1}^m \alpha_l(p) \ge 0$ for all $p\in\Prime$ in \eqref{eq:def1}.
A distribution on $\rd$ is a multidimensional \textit{m}-tuple compound Poisson zeta distribution if it has a characteristic function 
$$
f_{\vsig}\left(\vt\,\right) =\frac{Z_E\left(\vsig +{\rm i}\vt\,\right)}{Z_E(\vsig)}.
$$
\end{definition}

As to give some examples of multidimensional \textit{m}-tuple compound Poisson zeta distribution, we use following functions.
Let $\Re (s) >1$ and $\chi (n)$ be a real non-principal Dirichlet character. 
Put
\begin{equation*}
\begin{split}
&L_1 (s) = \prod_p ( 1 - \beta(p) p^{-s})^{-1} , \qquad \beta(p) := 
\begin{cases}
-1 & p=2 , \\
1 & \mbox{otherwise}, 
\end{cases}\\
&L_2 (s) = \prod_p ( 1 - \gamma(p) p^{-s})^{-1} , \qquad \gamma(p) := 
\begin{cases}
-1 & p=3 , \\
1 & \mbox{otherwise}, 
\end{cases}
\end{split}
\end{equation*}

Then we have following examples.

\begin{example}\label{ex:41}
\pn
$(i)$ Functions to be \textit{m}-tuple compound Poisson zeta.
$$
\zeta (s), \q \zeta (s) L(s,\chi), \q L_1 (s) L_2(s). 
$$
\pn
$(ii)$ Functions not to generate probability distributions. 
$$
L(s,\chi), \q L_1 (s), \q L_2(s).
$$
\end{example}

\subsection{Multidimensional \textit{m}-rank compound Poisson zeta distribution}

For $\va\in\rd$, we call that $\rd$-valued vectors $\va_1, \dots ,\va_m$ are {\it linearly dependent but linearly independent over the rationals} if
$\va_l =\psi_l\va$, $1\le l\le m$, where $\psi_l$ are algebraic real numbers and linearly independent over the rationals.
Denote by LI and LR, the conditions of $\rd$-valued vectors $\va_1, \dots ,\va_m$, where\\ 
(LI) linearly independent,\\
(LR) linearly dependent but linearly independent over the rationals, \\
respectively.

\begin{theorem}\label{th:cfep3}
Suppose that $\R^d$-valued vectors $\va_{1}, \ldots , \va_{m} $ satisfy the condition LI or LR in \eqref{eq:def1}. 
Then $f_{\vsig}$ is a characteristic function if and only if $\alpha_l(p) \ge 0$ for all $1\le l\le m$ and $p\in\Prime$.
Moreover, $f_{\vsig}$ is a compound Poisson characteristic function with its finite L\'evy measure $N_{\vsig}$ on $\rd$ given by 
\begin{equation}\label{eq:lm2}
N_{\vsig} (dx) = \sum_p \sum_{r=1}^{\infty} \sum_{l=1}^m \frac{1}{r}
\alpha_l(p)^r p^{-r\langle\va_l,\vsig\rangle} \delta_{\log p^r \va_l} (dx).
\end{equation}
\end{theorem}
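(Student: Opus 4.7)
The plan is to prove the two directions of the biconditional separately, mirroring the template of Theorem \ref{th:cfep} and Lemma \ref{lm:q} but treating the two hypotheses LI and LR by distinct arguments at one crucial step.

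For the sufficiency, suppose $\alpha_l(p)\ge 0$ for every $l,p$. Then each coefficient $\alpha_l(p)^r$ in \eqref{eq:lm2} is nonnegative, so $N_{\vsig}$ is a nonnegative measure, and a direct bound analogous to the one in the proof of Theorem \ref{th:cfep} gives $N_{\vsig}(\rd)\le 2m(\zeta(v)-1)<\infty$ where $v:=\min_{1\le l\le m}\langle\va_l,\vsig\rangle>1$. Inserting the Dirichlet series \eqref{eq:9.19} for $\log Z_E$ and rewriting $p^{-r\langle\va_l,{\rm i}\vt\rangle}-1$ as $e^{-\langle{\rm i}\vt,\log p^r\va_l\rangle}-1$ expresses $\log f_{\vsig}(\vt)$ as $\int_{\rd}(e^{-\langle{\rm i}\vt,x\rangle}-1)\,N_{\vsig}(dx)$, which is the compound Poisson form \eqref{CPcf}.

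For the necessity, I argue by contraposition: assuming some $\alpha_{l_0}(p_0)<0$, I produce $\vt_0\in\rd$ with $|f_{\vsig}(\vt_0)|>1$, contradicting Proposition \ref{pro:Sato} $(i)$. Following Lemma \ref{lm:q}, I write
\[
D(\vt)=\log|f_{\vsig}(\vt)|=\tfrac{1}{2}\sum_p\sum_{r=1}^{\infty}\sum_{l=1}^m\tfrac{1}{r}\alpha_l(p)^r p^{-r\langle\va_l,\vsig\rangle}\bigl(p^{r\langle\va_l,{\rm i}\vt\rangle}+p^{-r\langle\va_l,{\rm i}\vt\rangle}-2\bigr),
\]
truncate to $p,r\le 2K$ (the tail being $O(\varepsilon)$ by Theorem \ref{th:EPc}), and seek $\vt_0$ such that $p^{{\rm i}\langle\va_l,\vt_0\rangle}\approx 1$ for every truncated pair $(l,p)\ne(l_0,p_0)$ and $p_0^{{\rm i}\langle\va_{l_0},\vt_0\rangle}\approx -1$. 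The same factorization identities used in Lemma \ref{lm:q} show that only the odd-$r$ $(l_0,p_0)$ terms contribute non-negligibly, each of sign $-\alpha_{l_0}(p_0)^r\cdot(-4)>0$ because $\alpha_{l_0}(p_0)<0$, yielding $D(\vt_0)>0$ for $K$ large and $\varepsilon$ small.

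The main obstacle is constructing such a $\vt_0$, and this is where LI and LR are handled differently. In the LI case, since $\va_1,\ldots,\va_m$ are linearly independent in $\rd$ I pick a dual family $\vb_1,\ldots,\vb_m$ with $\langle\va_l,\vb_{l'}\rangle=\delta_{l,l'}$ and parameterize $\vt_0=\sum_l\beta_l\vb_l$, so the $m$ scalars $\beta_l=\langle\va_l,\vt_0\rangle$ decouple; applying the one-dimensional Kronecker theorem (Proposition \ref{pro:kroap1}) to each $\beta_l$ separately, with the $\Q$-linear independence of $\{\log p/(2\pi):p\in\Prime,\,p\le 2K\}$ guaranteed by unique factorization as in \eqref{eq:deftheta}, furnishes the required approximations. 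In the LR case, I write $\va_l=\psi_l\va$, choose $\vb$ with $\langle\va,\vb\rangle=1$, and parameterize $\vt_0=T\vb$, so $\langle\va_l,\vt_0\rangle=\psi_l T$; to apply Kronecker in the single variable $T$ I need $\{\psi_l\log p/(2\pi):1\le l\le m,\,p\le 2K\}$ to be $\Q$-linearly independent. This is where Baker's theorem on linear forms in logarithms enters: regrouping a hypothetical rational relation as $\sum_l\psi_l\gamma_l=0$ with $\gamma_l$ a rational combination of $\{\log p\}$, one extracts a $\Q$-linearly independent subfamily $\gamma_{l_1},\ldots,\gamma_{l_r}$ of logarithms of positive rationals, to which Baker's theorem applies to upgrade their $\Q$-independence to $\bar\Q$-independence, forcing the $\Q$-linear relations among the algebraic $\psi_l$ extracted from the coefficients to be trivial; combined with the assumed $\Q$-linear independence of $\psi_1,\ldots,\psi_m$, this forces all original coefficients to vanish. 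Once the independence is in hand, Proposition \ref{pro:kroap1} supplies $T$, completing the construction of $\vt_0$.
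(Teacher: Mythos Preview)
Your proof is correct, and the sufficiency direction together with the LR case match the paper's argument closely (the paper packages the Baker step as Proposition~\ref{pro:abaker}, showing directly that $\{\omega_l\log p_n\}$ is $\Q$-linearly independent, which is equivalent to your formulation).

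The genuine difference lies in the LI case. The paper treats LI and LR uniformly: it fixes algebraic reals $\omega_1=1,\omega_2,\ldots,\omega_m$ that are $\Q$-linearly independent, solves $\langle\va_l,\vt_0\rangle=\omega_l$ (possible precisely because the $\va_l$ are linearly independent), and then studies the one-parameter family $T\mapsto D(T\vt_0)$, invoking Baker's theorem via Proposition~\ref{pro:abaker} to get $\Q$-independence of $\{\omega_l\log p\}$ before applying Kronecker in the single variable $T$. Your dual-basis approach instead exploits the full $m$ degrees of freedom: with $\langle\va_l,\vt_0\rangle=\beta_l$ decoupled, each $\beta_l$ can be chosen independently (indeed $\beta_l=0$ for $l\ne l_0$ already suffices), and only the classical Kronecker theorem for the primes is needed. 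This is strictly more elementary, since Baker's theorem is entirely avoided under LI. A second minor difference is that you isolate a single bad pair $(l_0,p_0)$ and send every other factor to~$1$, whereas the paper partitions all pairs into $A^+_K$ and $A^-_K$ and sends the $A^-_K$ factors to~$-1$ simultaneously; both yield $D(\vt_0)>0$, but your version keeps the bookkeeping lighter. The price of your approach is that LI and LR require separate arguments, while the paper's route gives a single proof template for both (and for the main Theorem~\ref{th:main}).
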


For the proof of Theorem \ref{th:cfep3}, we need following two propositions and one lemma.  
The next proposition is given by Baker.
\begin{proposition}[{\cite[Theorem 2.4]{Baker}}]
The numbers $\gamma_1^{\beta_1} \cdots \gamma_n ^{\beta_n}$ are transcendental for any algebraic numbers $\gamma_1 , \ldots , \gamma_n$, other than $0$ or $1$, and any algebraic numbers $\beta_1, \ldots , \beta_n$ with $1,\beta_1, \ldots , \beta_n$ are linearly independent over the rationals.
\label{lem:baker}
\end{proposition}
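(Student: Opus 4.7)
The plan is to deduce the statement from Baker's \emph{main} theorem on linear forms in logarithms (Theorem 2.1 of \cite{Baker}), which asserts that whenever $\alpha_1,\ldots,\alpha_k$ are non-zero algebraic numbers with $\log\alpha_1,\ldots,\log\alpha_k$ linearly independent over $\mathbb Q$, the numbers $1,\log\alpha_1,\ldots,\log\alpha_k$ are linearly independent over $\overline{\mathbb Q}$. Granted that input, I would argue by contradiction: assume $\theta:=\gamma_1^{\beta_1}\cdots\gamma_n^{\beta_n}$ is algebraic, take logarithms to produce one algebraic linear relation among logarithms of algebraic numbers, and then use Baker's theorem to push that relation back to a rational linear relation among $1,\beta_1,\ldots,\beta_n$, contradicting the hypothesis.

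Concretely, fix consistent branches so that
$$
\log\theta-\beta_1\log\gamma_1-\cdots-\beta_n\log\gamma_n=0.
$$
Extract a maximal $\mathbb Q$-linearly independent subset $\log\delta_1,\ldots,\log\delta_k$ of the spanning set $\{\log\gamma_1,\ldots,\log\gamma_n,\log\theta\}$, and write each member of that set as $\log\gamma_i=\sum_j c_{ij}\log\delta_j$ and $\log\theta=\sum_j d_j\log\delta_j$ with rationals $c_{ij},d_j$. Substituting into the displayed relation yields
$$
\sum_{j=1}^{k}\Bigl(d_j-\sum_{i=1}^{n}c_{ij}\beta_i\Bigr)\log\delta_j=0.
$$
The $\delta_j$ are algebraic and their logarithms are $\mathbb Q$-linearly independent by construction, so Baker's main theorem forces every bracketed coefficient to vanish. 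Each resulting equation $c_{1j}\beta_1+\cdots+c_{nj}\beta_n-d_j\cdot 1=0$ is a $\mathbb Q$-linear relation among $1,\beta_1,\ldots,\beta_n$; by hypothesis these are linearly independent over $\mathbb Q$, hence $c_{ij}=0$ for every $i,j$ and $d_j=0$. But then $\log\gamma_i=\sum_j c_{ij}\log\delta_j=0$ for each $i$, i.e.\ $\gamma_i=1$, contrary to assumption. So no such algebraic $\theta$ exists.

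The real weight of the argument sits in Baker's main theorem itself, and that is where essentially all the effort would go. I would follow the classical auxiliary-function route: using Siegel's lemma, construct a non-trivial polynomial combination
$$
\Phi(z)=\sum_{\lambda_0,\ldots,\lambda_k}p(\lambda_0,\ldots,\lambda_k)\,z^{\lambda_0}\alpha_1^{\lambda_1 z}\cdots\alpha_k^{\lambda_k z}
$$
which, together with many of its derivatives, vanishes on a long initial segment of positive integers; then, assuming a non-trivial $\overline{\mathbb Q}$-linear relation among $1,\log\alpha_1,\ldots,\log\alpha_k$, extrapolate these vanishings far beyond the starting grid using Hermite-type interpolation and the maximum-modulus principle, and finally compare the extrapolated values against Liouville-style lower bounds on non-vanishing algebraic numbers measured by their heights. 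The technical heart---balancing the extrapolation range against the arithmetic height bounds so that the two estimates collide---is the hard step, and the careful execution is carried out in full in \cite{Baker}.
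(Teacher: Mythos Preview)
The paper does not supply its own proof of this proposition; it is quoted verbatim from \cite[Theorem~2.4]{Baker} and used as a black-box input to Proposition~\ref{pro:abaker}. Your deduction of Theorem~2.4 from Baker's Theorem~2.1 is correct and is precisely the route Baker himself takes: the assumed algebraicity of $\theta$ produces an algebraic linear relation among logarithms of algebraic numbers, and after passing to a $\mathbb{Q}$-basis $\{\log\delta_j\}$ for those logarithms, Theorem~2.1 forces every coefficient $d_j-\sum_i c_{ij}\beta_i$ to vanish; the resulting rational relations among $1,\beta_1,\ldots,\beta_n$ then kill all the $c_{ij}$ and $d_j$, contradicting $\gamma_i\neq 1$. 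Your high-level outline of the auxiliary-function proof of Theorem~2.1 (Siegel's lemma, extrapolation via the maximum principle, and comparison with Liouville-type lower bounds) is also accurate as a sketch, though as you acknowledge the delicate parameter balancing and the inductive structure on $k$ require the full treatment in \cite{Baker}.
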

By using this fact, we have the following. 
\begin{proposition}[{\cite[Proposition 2.2]{Nakamurasr1}}]
Let $p_n$ be the $n$-th prime number and $\omega_1, \omega_2, \ldots ,$ $\omega_m$ with $\om_1 =1$ be algebraic real numbers which are linearly independent over the rationals. 
Then $\{ \log p_n ^{\omega_l}\}_{n \in {\mathbb{N}}}^{1 \le l \le m}$ is also linearly independent over the rationals. 
\label{pro:abaker}
\end{proposition}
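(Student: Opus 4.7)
The plan is a proof by contradiction driven by Proposition \ref{lem:baker} (Baker's theorem). Suppose, contrary to the conclusion, that $\{\omega_l \log p_n\}_{n\in\mathbb{N}}^{1\le l\le m}$ is linearly dependent over $\mathbb{Q}$. Then there exist rationals $c_{n,l}$, only finitely many nonzero and not all zero, such that
\[
\sum_{l=1}^{m} \omega_l \sum_{n=1}^{N} c_{n,l}\log p_n \,=\, 0.
\]
Introducing the positive rationals $\alpha_l := \prod_{n=1}^{N} p_n^{c_{n,l}}$, the relation becomes $\sum_{l=1}^{m} \omega_l \log \alpha_l = 0$, and using $\omega_1 = 1$ I would rewrite it as the multiplicative identity $\alpha_1 = \prod_{l=2}^{m} \alpha_l^{-\omega_l}$.

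Next I would prune trivial factors. By unique factorization in $\mathbb{Z}$ (applied after clearing a common denominator of the $c_{n,l}$), one has $\alpha_l = 1$ if and only if $c_{n,l} = 0$ for every $n$. Set $L := \{l \ge 2 : \alpha_l \ne 1\}$. If $L$ were empty, the relation would collapse to $\sum_n c_{n,1}\log p_n = 0$, and a second appeal to unique factorization would force $c_{n,1} = 0$ for all $n$, contradicting our assumption. So $L \ne \emptyset$, and the identity simplifies to
\[
\alpha_1 \,=\, \prod_{l\in L} \alpha_l^{-\omega_l}.
\]

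Finally I would invoke Baker. For each $l \in L$ the number $\alpha_l$ is a positive rational distinct from $0$ and $1$, hence admissible as a base. The exponents $-\omega_l$ for $l \in L$ are algebraic reals, and the hypothesis that $\omega_1 = 1, \omega_2, \ldots, \omega_m$ are linearly independent over $\mathbb{Q}$ is equivalent to $1, -\omega_2, \ldots, -\omega_m$ being so; restricting to the subfamily indexed by $L$ preserves linear independence. Proposition \ref{lem:baker} then declares $\prod_{l\in L} \alpha_l^{-\omega_l}$ to be transcendental, which contradicts its equality with the rational number $\alpha_1$.

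The main obstacle, as I see it, is not the calculation itself but verifying the hypotheses of Baker's theorem after restriction to $L$: one must simultaneously arrange that every base is algebraic and $\ne 0, 1$ (handled by the unique-factorization argument identifying the trivial $l$'s) and that linear independence with $1$ over $\mathbb{Q}$ is retained for the surviving exponents (handled by the elementary fact that subsets of $\mathbb{Q}$-linearly independent sets are themselves linearly independent). Once this bookkeeping is in place the transcendence conclusion yields the contradiction immediately.
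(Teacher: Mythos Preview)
Your proof is correct and follows essentially the same approach as the paper's: rewrite the rational linear relation multiplicatively as $\alpha_1 = \prod_{l\ge 2}\alpha_l^{-\omega_l}$ with $\alpha_l$ a product of prime powers, then invoke Baker's theorem (Proposition~\ref{lem:baker}) together with unique factorization to force all coefficients to vanish. Your handling of the degenerate factors via the index set $L$ is slightly more explicit than the paper's remark about ``a lower dimensional case of Baker's theorem,'' but the argument is the same.
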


For the convenience of readers, we write its proof here. 

\begin{proof}
Denote by $\mathbb{Q}$ the set of all rational numbers.
Suppose 
$$
\sum_{n=1}^r c_{1n} \log p_n + \sum_{n=1}^r c_{2n} \log p_n^{\omega_2} + \cdots + \sum_{n=1}^r c_{mn} \log p_n ^{\omega_m} = 0 , 
$$
for  $r,n\in\N$, $1\le l\le m$ and $c_{ln} \in {\mathbb{Q}}$.
By the formula above, we have
\begin{equation}
p_1^{c_{11}} \cdots p_r^{c_{1r}} = ( p_1^{c_{21}} \cdots p_r^{c_{2r}} )^{-\omega_2} \cdots ( p_1^{c_{m1}} \cdots p_r^{c_{mr}} )^{-\omega_m} .
\label{eq:mu}
\end{equation}
The left-hand side of (\ref{eq:mu}) is an algebraic number. 
But the right-hand side of (\ref{eq:mu}) is transcendental when $( c_{21} , \ldots , c_{2r}, \ldots , c_{m1} , \ldots , c_{mr} ) \ne ( 0, \ldots , 0)$ by Lemma \ref{lem:baker} and the unique factorization of prime numbers. 
If $c_{h1} = \cdots = c_{hr} =0$, for some $h$, $2 \le h \le m$, we can apply a lower dimensional case of Baker's theorem. 
Hence we have $( c_{21} , \ldots , c_{2r}, \ldots , c_{m1} , \ldots , c_{mr} ) = ( 0, \ldots , 0)$. 
Moreover, we obtain $c_{11} = \cdots = c_{1r} =0$ by the unique factorization of prime numbers.
\end{proof}

Now we give a following lemma.

\begin{lemma}\label{lm:ln}
If there exists a set of pairs of $h$, $1\le h\le m$, and prime numbers $q$ satisfying $\alpha_l(q) < 0$, then there exists $\vt_0\in\rd$ such that $|f_{\vsig}(\vt_0)|>1$. 
\end{lemma}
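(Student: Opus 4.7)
The plan is to mimic the argument used for Lemma \ref{lm:q}, but now exploit the linear independence hypothesis (either LI or LR) on the vectors $\va_1,\dots,\va_m$ to tune each product $\langle\va_l,\vt\rangle$ essentially independently. As in that earlier proof, I would work with
\[
D(\vt):=\log\left|\frac{Z_E(\vsig+{\rm i}\vt)}{Z_E(\vsig)}\right|
=\sum_p\sum_{r=1}^\infty\sum_{l=1}^m\frac{1}{r}\,\alpha_l(p)^r p^{-r\langle\va_l,\vsig\rangle}\bigl(\cos(rT_l\log p)-1\bigr),
\]
where $T_l:=\langle\va_l,\vt\rangle$, and exhibit $\vt_0$ so that $D(\vt_0)>0$. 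Since each factor $\cos(rT_l\log p)-1$ is non-positive, the idea is to force those factors close to $-2$ exactly when $\alpha_l(p)^r<0$ and close to $0$ otherwise; then every surviving term is non-negative and the term $(l,p,r)=(h,q,1)$ coming from the bad pair contributes a strictly positive amount $\approx 2|\alpha_h(q)|q^{-\langle\va_h,\vsig\rangle}$.

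To realise this, fix $K\in\N$, let $p_1,\ldots,p_K$ be the first $K$ primes, and set
\[
\phi_{l,n}:=\begin{cases}0&\text{if }\alpha_l(p_n)\ge 0,\\[2pt]1/2&\text{if }\alpha_l(p_n)<0,\end{cases}\qquad 1\le l\le m,\ 1\le n\le K.
\]
In the LI case the linear map $\vt\mapsto(\langle\va_1,\vt\rangle,\dots,\langle\va_m,\vt\rangle)$ is surjective onto $\R^m$, so I may choose each $T_l$ independently; for each $l$ I apply the Kronecker approximation Proposition \ref{pro:kroap1} to $\theta_n=\log p_n/(2\pi)$ with targets $\phi_{l,n}$ to obtain $T_l\in\R$ and integers $h_{l,n}$ with $|T_l\log p_n/(2\pi)-h_{l,n}-\phi_{l,n}|<\ep'$. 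In the LR case $T_l=\psi_l T$ for a single $T\in\R$ to be chosen; here I invoke Proposition \ref{pro:abaker} to conclude that $\{\psi_l\log p_n/(2\pi)\}_{1\le l\le m,\,1\le n\le K}$ are linearly independent over $\Q$, and then apply Kronecker to this joint family with targets $\phi_{l,n}$ to get $T$ (and hence $\vt_0$ satisfying $\langle\va,\vt_0\rangle=T$) with the same kind of simultaneous approximation.

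Next I would bound $D(\vt_0)$. Using absolute convergence from Theorem \ref{th:EPc}, the tail sum over $p>2K$ or $r>2K$ is less than any preassigned $\ep$ for $K$ large. For the main block $p\le 2K$, $r\le 2K$, the factorisations $x^r-1=(x-1)(x^{r-1}+\cdots+1)$ and $x^r\pm 1=(x\pm1)(\cdots)$ (as in the proof of Lemma \ref{lm:q}) show that
$\cos(rT_l\log p)-1$ is within $O(r^2\ep'^2)$ of $0$ when $\phi_{l,n}=0$, within $O(r^2\ep'^2)$ of $-2$ when $\phi_{l,n}=1/2$ and $r$ is odd, and within $O(r^2\ep'^2)$ of $0$ when $\phi_{l,n}=1/2$ and $r$ is even. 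Thus every $(l,p,r)$ with $\alpha_l(p)\ge 0$ contributes at most $O(K\ep'^2 p^{-rv})$ in absolute value (with $v=\min_l\langle\va_l,\vsig\rangle>1$); every $(l,p,r)$ with $\alpha_l(p)<0$ and $r$ even contributes at most the same order; and every $(l,p,r)$ with $\alpha_l(p)<0$ and $r$ odd contributes $\frac{2}{r}|\alpha_l(p)|^r p^{-r\langle\va_l,\vsig\rangle}+O(K\ep'^2 p^{-rv})$, which is non-negative modulo the error. The $(h,q,1)$ term therefore guarantees
\[
D(\vt_0)\ge -\ep-CK\ep'^2+2|\alpha_h(q)|q^{-\langle\va_h,\vsig\rangle},
\]
for an absolute constant $C$ depending on $\vsig$ and $m$. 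Choosing $\ep$ and then $\ep'$ small enough yields $D(\vt_0)>0$, i.e.\ $|f_{\vsig}(\vt_0)|>1$.

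The main obstacle is the LR case, where a single real parameter $T$ must simultaneously realise the prescribed residues for all $m\cdot K$ numbers $\psi_l T\log p_n/(2\pi)$ at once. The naive Kronecker theorem cannot be invoked unless these quantities are linearly independent over the rationals, and securing this independence is exactly where Baker's transcendence result (Proposition \ref{lem:baker}) enters through Proposition \ref{pro:abaker}. A secondary technical point is keeping the error $O(K\ep'^2)$ dominated by the definite positive contribution from $(h,q,1)$, which dictates the order in which $\ep$, $K$, and $\ep'$ must be chosen.
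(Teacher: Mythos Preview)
Your argument is correct and shares the paper's overall architecture: truncate by absolute convergence, use Kronecker's theorem to place each $p^{i\langle\va_l,\vt_0\rangle}$ near $\pm 1$ according to the sign of $\alpha_l(p)$, propagate to powers via the factorisations of $x^r\pm 1$, and bound $D(\vt_0)$ from below by the contribution of the bad pair $(h,q)$. In the LR case you proceed exactly as the paper does (Baker through Proposition~\ref{pro:abaker}, then joint Kronecker on $\{\psi_l\log p_n\}$); one small point is that Proposition~\ref{pro:abaker} is stated with $\omega_1=1$, so you should first normalise by $\psi_l\mapsto\psi_l/\psi_1$, which preserves algebraicity and rational independence.

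The one genuine departure is in the LI case. The paper does \emph{not} use surjectivity of $\vt\mapsto(\langle\va_l,\vt\rangle)_l$; instead it fixes algebraic reals $\omega_1=1,\omega_2,\dots,\omega_m$ linearly independent over $\Q$, picks a single $\vt_0$ with $\langle\va_l,\vt_0\rangle=\omega_l$, and then varies a scalar $T$, applying Kronecker to the joint family $\{\omega_l\log p_n\}$---which again requires Baker (Proposition~\ref{pro:abaker}) to secure rational independence. Your route---run Kronecker $m$ times separately to obtain $T_1,\dots,T_m$ and then solve $\langle\va_l,\vt_0\rangle=T_l$ by surjectivity---is more elementary and eliminates Baker from the LI case entirely. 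What the paper's approach buys is uniformity (LI and LR become the same one-parameter problem once the $\omega_l$ are fixed); what yours buys is that in the genuinely independent case only the classical Kronecker theorem and unique factorisation are needed.
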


\begin{proof}
Recall that $\log Z_E (\vs)$ is given by (\ref{eq:9.19}). 
Denote by $A^-$ and $A^+$ the set of pairs of numbers $\{h,q\}$ and $\{l,p\}$, where $1\le l\le m$ and $1\le h\le m$, and prime numbers $p$ and $q$ satisfying $\alpha_h(q) < 0$ and $\alpha_l(p)\ge0$, respectively. 
Moreover, let $K \in \N$, $A^+_K := \{ \{l,p\} \in A^+ : 2 \le p \le 2K\}$ and $A^-_K := \{ \{h,q\} \in A^- : 2 \le q \le 2K\}$. 
First we consider the case when $\va_{1}, \ldots , \va_{m} $ satisfy the condition LI.
Let $\om_1,\omega_2, \ldots ,\omega_m$ with $\omega_1=1$ be algebraic real numbers which are linearly independent over the rationals. 
Then there exits $\vt_0\in\rd$ such that $(\langle \va_1, \vt_0 \rangle, \ldots , \langle \va_m, \vt_0 \rangle) =(\omega_1, \omega_2, \ldots ,\omega_{m})$ since $\va_{1}, \ldots , \va_{m} $ are LI.
When $\va_{1}, \ldots , \va_{m} $ satisfy the condition LR, put $\om_l:=\psi_l(\psi_1\langle \va,\vt_0\rangle)^{-1}$, for $1\le l\le m$ and $\vt_0\in\rd\backslash \{0\}$. Then $\om_1,\omega_2, \ldots ,\omega_m$ are algebraic real numbers having linear independence over the rationals with $\om_1=1$.
In both cases, we can say the following.
Define a vector-valued function $D(T\vt_0)$, $T\in\R$, as follows:
\begin{equation*}
\begin{split}
D(T\vt_0)&:= \log \left| \frac{Z_E (\vsig + {\rm{i}}T\vt_0)}{Z_E (\vsig)} \right| \\
&=\frac{1}{2} \sum_p \sum_{r=1}^{\infty} \sum_{l=1}^m \frac{1}{r} 
\left( \alpha_l(p)^r p^{-r\langle\va_l,\vsig\rangle} 
\bigl(p^{r{\rm{i}}T\langle\va_l,\vt_0\rangle} + p^{-r{\rm{i}}T\langle\va_l,\vt_0\rangle} -2 \bigr)\right) \\
&=\frac{1}{2} \sum_p \sum_{r=1}^{\infty} \sum_{l=1}^m \frac{1}{r} 
\Bigl( \alpha_l(p)^r p^{-r\langle\va_l,\vsig\rangle} 
\bigl(p^{r{\rm{i}}T\omega_l} + p^{-r{\rm{i}}T\omega_l} -2 \bigr)\Bigr) .
\end{split}
\end{equation*} 
Then, for any $\ep >0$, we can see that there exists an integer $K$ such that 
$|2\sum_{r,p>2K}' \sum_{l=1}^m$ $r^{-1}\alpha_l(p)^rp^{-r\langle\va_l,\vsig\rangle}|< \varepsilon$ and 
$A^-_K \neq\emptyset$ by the absolute convergence of $Z_E(\vs)$ (see, Theorem \ref{th:EPc}).
In the view of $p_k^{{\rm{i}}\omega_lt}= e^{{\rm{i}}t\omega_l\log p_k}$ and Propositions \ref{pro:kroap1} and \ref{pro:abaker}, 
for any $\ep'>0$ independent of $\ep$ and $K$, there exists $T_0 \in \R$, such that
\begin{equation*}
|p^{{\rm{i}}\om_lT_0}-1| < \varepsilon', \q \{l,p\} \in A^+_K \q \mbox{and} \q 
|q^{{\rm{i}}\om_{h}T_0}+1| < \varepsilon', \q \{h,q\} \in A^-_K.
\end{equation*}
By the factorization $x^r-1 = (x-1)(x^{r-1}+\cdots+1)$, for any $1 \le r \le 2K$ and $\{l,p\}\in A^+_K$, we have 
$|p^{r{\rm{i}}\omega_lT_0}-1| < r\varepsilon' \le 2K\varepsilon'$. 
Similarly, by the factorization $x^{2k-1}+1 = (x+1)(x^{2k-2}-x^{2k-3}+\cdots+1)$, when $r=2k-1\in 2\N -1$ with $k\le K$, one has $|q^{(2k-1){\rm{i}}\om_{h}T_0}+1| < (2k-1) \varepsilon' \le 2K\varepsilon'$ for any $\{h,q\}\in A^-_K$.
Also, by the factorization $x^{2k}-1 = (x+1)(x-1)(x^{2k-2}+x^{2k-4}+\cdots+1)$ when $r=2k\in 2\N$ with $k\le K$, it holds that 
$|q^{2k{\rm{i}}\om_{h}T_0}-1| < 2k \varepsilon' \le 2K\varepsilon'$ for any $\{h,q\}\in A^-_K$.
Hence there exits $T_0\in\R$ such that
\begin{equation*}
\begin{split}
-4K\varepsilon' < p^{r{\rm{i}}\omega_lT_0} + p^{-r{\rm{i}}\omega_lT_0} -2 \le 0,
\q &\{l,p\}\in A^+_K, \,\, 1\le r\le 2K, \\
-4 -4K\varepsilon' < q^{(2k-1){\rm{i}}\om_{h}T_0} + q^{-(2k-1){\rm{i}}\om_{h}T_0} - 2 < -4 + 4K\varepsilon' , \q
&\{h,q\}\in A^-_K,\,\, 1\le k\le K, \\
-4K\varepsilon' \le q^{2k{\rm{i}}\om_{h}T_0} + q^{-2k{\rm{i}}\om_{h}T_0} - 2 \le 0, \q &\{h,q\}\in A^-_K,\,\,1\le k\le K.
\end{split}
\end{equation*}
By $\alpha_l(p) \ge 0$ and $\alpha_h(q) < 0$ for any $\{l,p\}\in A^+_K$ and $\{h,q\}\in A^-_K$, respectively, we have
\begin{align}
\sum_{\{l,p\}\in A^+_K} \sum_{r=1}^{2K} \frac{1}{r} \alpha_l(p)^r p^{-r\langle\va_l,\vsig\rangle}, \,\,\,
\sum_{\{h,q\}\in A^-_K}\sum_{k=1}^{K} \frac{1}{2k} \alpha_h(q)^{2k} q^{-2k\langle\va_h,\vsig\rangle}&\ge 0,
\label{eq:lem311;1} \\
\sum_{\{h,q\}\in A^-_K}\sum_{k=1}^{K} \frac{1}{2k-1} \alpha_h(q)^{2k-1} q^{-(2k-1)\langle\va_h,\vsig\rangle} &< 0
\label{eq:lem311;2}.
\end{align}
Note that if there do not exist any $p \in A^+_K$, we regard the summation $\sum_{\{l,p\}\in A^+_K}$ in (\ref{eq:lem311;1}) is $0$. Therefore we obtain 
\begin{equation*}
\begin{split}
D(T_0\vt_0) >& -\ep-2K\varepsilon'
\sum_{\{l,p\}\in A^+_K} \sum_{r=1}^{2K} \frac{1}{r} \alpha_l(p)^r p^{-r\langle\va_l,\vsig\rangle}
-2K\varepsilon' \sum_{\{h,q\}\in A^-_K}\sum_{k=1}^{K} \frac{1}{2k} 
\alpha_h(q)^{2k} q^{-2k\langle\va_h,\vsig\rangle} \\
&+ (2K\varepsilon' -2) \sum_{\{h,q\}\in A^-_K}\sum_{k=1}^{K} \frac{1}{2k-1} \alpha_h(q)^{2k-1} q^{-(2k-1)\langle\va_h,\vsig\rangle} .
\end{split}
\end{equation*}
Now suppose $\varepsilon$ is sufficiently small and $\varepsilon'$ such that $K\varepsilon' < \varepsilon$. 
Then one has $D(T_0\vt_0) >0$ from (\ref{eq:lem311;1}) and (\ref{eq:lem311;2}). This completes the proof.
\end{proof}

\begin{proof}[Proof of Theorem \ref{th:cfep3}]
By Proposition \ref{pro:Sato} $(i)$ and Lemma \ref{lm:ln}, if there exists a set of $h$, $1\le h\le m$, and prime numbers $q$ satisfying $\alpha_h(q) < 0$ then $f_{\vsig}$ is not a characteristic function.
Thus we only have to show that $f_{\vsig}$ is a compound Poisson characteristic function with a finite L\'evy measure $N_{\vsig}$ on $\rd$ given in $\eqref{eq:lm2}$ if $\alpha_l(p) \ge 0$ for all $1\le l\le m$ and $p\in\Prime$.  
It is easy to see that $N_{\vsig}$ is a measure on $\rd$ since $N_{\vsig}(x)\ge 0$ for every $x\in\rd$ when $\alpha_l(p) \ge 0$ for all $1\le l\le m$ and $p\in\Prime$.
Now put $v := \min_{1\le l\le m}\langle \va_l,\vsig\rangle>1$. 
Then $\zeta (v)$ is a positive constant (see, Section 1.2) and note that $0\le \alpha_l(p)\le 1$ for $1\le l\le m$ and $p\in\Prime$.
So that we have
\begin{equation*}
\begin{split}
N_{\vsig} (\rd) =& \int_{\rd}\sum_p \sum_{r=1}^{\infty} \sum_{l=1}^m \frac{1}{r}
\alpha_l(p)^r p^{-r\langle\va_l,\vsig\rangle} \delta_{\log p^r \va_l} (dx)
= \sum_p \sum_{r=1}^{\infty} \sum_{l=1}^m \frac{1}{r} \alpha_l(p)^r p^{-r\langle\va_l,\vsig\rangle} \\ 
\le &\sum_p \sum_{r=1}^{\infty} \sum_{l=1}^m \frac{1}{r} p^{-r\langle\va_l,\vsig\rangle} \le 
m\sum_p \sum_{r=1}^{\infty} p^{-rv}
 \le m \sum_{n=2}^{\infty} \sum_{r=1}^{\infty} n^{-rv} \\
=& \, m\sum_{n=2}^{\infty} \frac{n^{-v}}{1-n^{-v}}
\le 2m \sum_{n=2}^{\infty} n^{-v} = 2m \left(\zeta (v) -1\right)<\infty.
\end{split}
\end{equation*}
Thus, $N_{\vsig}$ is a finite measure on $\rd$.

By Theorem \ref{th:EPc}, for $\vt\in\rd$, the function $f_{\vsig}(\vt)$ converges when $\min_{1\le l\le m}\langle \va_l,\vsig\rangle >1$.
Then, we have 
\begin{align}
\log f_{\vsig}(\vt)=& \log \frac{Z_E (\vsig + {\rm i}\vt)}{Z_E (\vsig)} 
= \sum_p \sum_{r=1}^{\infty} \sum_{l=1}^m 
\frac{1}{r}\alpha_l(p)^r p^{-r\langle\va_l,\vsig\rangle} \bigl(p^{-r\langle\va_l,{\rm i}\vt\rangle} -1\bigr)
\nonumber\\ 
=& \sum_p \sum_{r=1}^{\infty} \sum_{l=1}^m \frac{1}{r} 
\alpha_l(p)^r p^{-r\langle\va_l,\vsig\rangle} \bigl(e^{-r\langle\va_l,{\rm i}\vt\rangle \log p} -1\bigr)\nonumber\\ 
= &\int_{\rd} (e^{-\langle{\rm i}\vt,x\rangle}-1) \sum_p \sum_{r=1}^{\infty} \sum_{l=1}^m \frac{1}{r}
\alpha_l(p)^r p^{-r\langle\va_l,\vsig\rangle} \delta_{\log p^r \va_l} (dx)\nonumber\\
=&\int_{\rd} (e^{-\langle{\rm i}\vt,x\rangle}-1)N_{\vsig}(dx).\label{eq:chf2}
\end{align}

It is also easy to see that the measure $N_{\vsig}$ satisfies $\int_{|x|<1}|x|N_{\vsig}(dx)\le N_{\vsig}(\rd)<\infty$.
Thus $f_{\vsig}$ is an infinitely divisible characteristic function and $N_{\vsig}$ is the L\'evy measure of $f_{\vsig}$ by Propositions \ref{pro:LK1} and \ref{pro:LK2}.
As same as the case of $N^{(\va)}_{\vsig}$, regarding $c=N_{\vsig}(\rd)$ and $\rho (dx) =c^{-1}N_{\vsig}(dx)$ in \eqref{CPcf} of Example \ref{examples}, we can see that $\eqref{eq:chf2}$ is of the form of a compound Poisson characteristic function. 
Hence $f_{\vsig}$ is a compound Poisson characteristic function with a finite L\'evy measure $N_{\vsig}$ on $\rd$. 
This completes the proof.
\end{proof}

Now we define another multidimensional zeta distribution generated by $Z_E$.

\begin{definition}[Multidimensional \textit{m}-rank compound Poisson zeta distribution]
Let $\va_{1}, \ldots , \va_{m} $ be $\R^d$-valued vectors satisfying LI or LR and $\alpha_l(p) \ge 0$ for all $1\le l\le m$ and $p\in\Prime$ in \eqref{eq:def1}. 
A distribution on $\rd$ is a multidimensional \textit{m}-rank compound Poisson zeta distribution if it has a characteristic function 
$$
f_{\vsig}\left(\vt\,\right) =\frac{Z_E\left(\vsig +{\rm i}\vt\,\right)}{Z_E(\vsig)}.
$$
\end{definition}

In the following, we use the Dirichlet characters $\chi$ and functions $L_1$ and $L_2$ appeared in Example \ref{ex:41}.

\begin{example}
\pn
$(i)$ A function to be \textit{m}-rank compound Poisson zeta.
$$
\zeta (s_1)\zeta (s_1+s_2),\q \zeta (s_1+\alpha)\zeta (s_1+s_2),\q \alpha >0. 
$$
\pn
$(ii)$ Functions not to generate probability distributions. 
$$
\zeta(s_1)L(s_1+s_2,\chi), \q \zeta(s_1)L_m(s_1+s_2),\q m=1,2.
$$
\end{example}

\subsection{Main theorem}

Historically, we have mentioned that the Riemann zeta distribution is known to be compound Poisson in Proposition \ref{pro:RD}.
Some multidimensional zeta distributions including not infinitely divisible ones are introduced in \cite{AN12s}, \cite{AN11k} and some others.
Still, there are no infinitely divisible multidimensional zeta distributions, which are discrete infinitely divisible with infinitely many mass points, defined yet.
In Sections 3.1 and 3.2, we have studied the relations between one and multidimensional polynomial Euler products and infinite divisibility.
By following these stories, we regard next multidimensional polynomial Euler product is the most suitable form to define a naturally expanded multidimensional compound Poisson zeta distribution with present number theory fully applied.

\begin{definition}[Multidimensional $\eta$-tuple $\varphi$-rank Euler product, $Z^{\eta,\varphi}_E(\vs)$]
Let $d,\varphi,\eta\in\N$ and $\vs\in\mathbb{C}^d$.
For $-1 \le \alpha_{lk}(p) \le 1$ and $\va_l \in {\mathbb{R}}^d$, $1\le l\le \varphi$, $1\le k\le \eta$ and $p\in\Prime$, we define multidimensional $\eta$-tuple $\varphi$-rank Euler product given by
\begin{equation}\label{eq:defbeta}
Z^{\eta,\varphi}_E (\vs) := \prod_p \prod_{l=1}^{\varphi} \prod_{k=1}^{\eta} 
\left( 1 - \alpha_{lk}(p) p^{-\langle \va_l,\vs\rangle} \right)^{-1}.
\end{equation}
\end{definition}
Note that we have $Z^{\eta,\varphi}_E\in\mathcal{Z}_E$ with $m=\varphi\times\eta$ and converges absolutely also in the region $\min_{1\le l\le \varphi}\Re \langle \va_l,\vs\rangle >1$ (see, Definition \ref{def:EP} and the proof of Theorem \ref{th:EPc}). \\

In the following, we regard $f_{\vsig}$ is a normalized function of $Z^{\eta,\varphi}_E\in\mathcal{Z}_E$ that is
$$
f_{\vsig}(\vt)=\frac{Z^{\eta,\varphi}_E(\vsig +{\rm i}\vt)}{Z^{\eta,\varphi}_E(\vsig)}.
$$

\begin{theorem}[{\bf The main theorem}]\label{th:main}
Suppose that $\R^d$-valued vectors $\va_{1}, \ldots , \va_{\varphi}$ satisfy the condition LI or LR, and $\alpha_{lk} (p)= 0$ or \,$\pm1$ for $1\le l \le \varphi$, $1\le k \le \eta$ and $p\in\Prime$ in \eqref{eq:defbeta}. 
Then $f_{\vsig}$ is a characteristic function if and only if $\sum_{k=1}^{\eta} \alpha_{lk} (p) \ge 0$ for all $1\le l \le \varphi$ and $p\in\Prime$.
Moreover, $f_{\vsig}$ is a compound Poisson characteristic function with its finite L\'evy measure $N^{\eta,\varphi}_{\vsig}$ on $\rd$ given by 
\begin{equation}\label{eq:mlm}
N^{\eta,\varphi}_{\vsig} (dx) = \sum_p \sum_{r=1}^{\infty} \sum_{l=1}^{\varphi} \sum_{k=1}^{\eta} \frac{1}{r}
\alpha_{lk} (p)^r p^{-r\langle\va_{l},\vsig\rangle} \delta_{\log p^r \va_{l}} (dx).
\end{equation}
\end{theorem}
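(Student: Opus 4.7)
The plan is to merge the two proofs already given for Theorems~\ref{th:cfep} and~\ref{th:cfep3}. The product $Z^{\eta,\varphi}_E$ is a member of the class \eqref{eq:def1} with $m=\varphi\eta$, so Theorem~\ref{th:EPc} and the logarithmic expansion \eqref{eq:9.19} apply. The inner index $k$, whose factors share a common $\va_l$, reproduces the $m$-tuple setting of Theorem~\ref{th:cfep}, while the outer index $l$, whose directions $\va_l$ satisfy LI or LR, reproduces the $m$-rank setting of Theorem~\ref{th:cfep3}; the two can be run simultaneously.

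For the ``if'' direction, assume $\sum_{k=1}^{\eta}\alpha_{lk}(p)\ge 0$ for every $l$ and $p$. The lemma preceding Lemma~\ref{lm:q}, applied for each fixed pair $(l,p)$, upgrades this to $\sum_{k=1}^{\eta}\alpha_{lk}(p)^r\ge 0$ for every $r\ge 1$, which is exactly what is needed for the coefficient at each atom $\log p^r\va_l$ in \eqref{eq:mlm} to be nonnegative. Finiteness of $N^{\eta,\varphi}_{\vsig}$ then follows from the same majorization used in the proofs of Theorems~\ref{th:cfep} and~\ref{th:cfep3}, bounding the total mass by $2\varphi\eta(\zeta(v)-1)$ with $v:=\min_{1\le l\le\varphi}\langle\va_l,\vsig\rangle>1$. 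Expansion \eqref{eq:9.19} and an interchange of summation with integration against the atomic measures yield
\[
\log f_{\vsig}(\vt)=\int_{\rd}\bigl(e^{-{\rm i}\langle\vt,x\rangle}-1\bigr)N^{\eta,\varphi}_{\vsig}(dx),
\]
and Example~\ref{examples} together with Propositions~\ref{pro:LK1} and \ref{pro:LK2} identify this as the characteristic function of a compound Poisson distribution with L\'evy measure $N^{\eta,\varphi}_{\vsig}$.

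For the ``only if'' direction, the plan is to assume some pair $(l_0,p_0)$ with $\sum_{k=1}^{\eta}\alpha_{l_0k}(p_0)<0$ exists and then exhibit $\vt_0\in\rd$ with $|f_{\vsig}(\vt_0)|>1$, contradicting Proposition~\ref{pro:Sato}~$(i)$. Imitating Lemma~\ref{lm:ln}, I would split the pairs $(l,p)$ into good ones (where $\sum_k\alpha_{lk}(p)\ge 0$) and bad ones (where $\sum_k\alpha_{lk}(p)<0$), truncate to $p\le 2K$ with $K$ large enough that the tail of $\log Z^{\eta,\varphi}_E$ is bounded by $\ep$, and look for $\vt_0$ making $p^{{\rm i}\langle\va_l,\vt_0\rangle}$ close to $+1$ on the truncated good set and close to $-1$ on the truncated bad set.

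The main obstacle is precisely this simultaneous approximation, where the LI or LR hypothesis together with Baker's theorem enters. In the LI case, $\langle\va_l,\vt_0\rangle$ can be prescribed freely; in the LR case with $\va_l=\psi_l\va$, taking $\vt_0$ parallel to $\va$ makes $\langle\va_l,\vt_0\rangle$ proportional to $\psi_l$. Either way, one arranges $\langle\va_l,\vt_0\rangle=T_0\omega_l$ for a common scalar $T_0\in\R$, where $\omega_1=1,\omega_2,\dots,\omega_{\varphi}$ are algebraic reals linearly independent over $\Q$. Proposition~\ref{pro:abaker} then makes $\{\log p^{\omega_l}\}_{p,l}$ linearly independent over $\Q$, and Proposition~\ref{pro:kroap1} produces $T_0$ realizing the desired approximations. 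The remainder---writing $D(\vt_0)=\log|f_{\vsig}(\vt_0)|$ in its real-symmetric form, bounding good and bad summands via the factorizations $x^r-1=(x-1)(x^{r-1}+\cdots+1)$ and $x^{2k-1}+1=(x+1)(x^{2k-2}-\cdots+1)$, and using the inner-index inequality of the lemma preceding Lemma~\ref{lm:q} to keep the good-pair contributions nonnegative---is exactly the three-sum argument on $C_1,C_2,C_3$ from the proof of Lemma~\ref{lm:ln}, concluding $D(\vt_0)>0$.
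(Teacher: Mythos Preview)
Your proposal is correct and follows essentially the same route as the paper's own proof: the paper also reduces to Lemma~\ref{lm:ln} by collapsing the inner $k$-sum into $\beta_l^{(r)}(p):=\sum_{k=1}^{\eta}\alpha_{lk}(p)^r$ (using exactly the sign observation you attribute to the lemma before Lemma~\ref{lm:q}), then runs the LI/LR${}+{}$Baker${}+{}$Kronecker approximation and the $C_1,C_2,C_3$ estimate verbatim, and handles the ``if'' direction by the same majorization and compound-Poisson identification you describe.
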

Note that the case when $\varphi=1$ is of \textit{m}-tuple and is of \textit{m}-rank when $\eta=1$.

\begin{proof}
First, we show that if there exists a set of pairs of $h$, $1\le h\le \varphi$, and prime numbers $q$ satisfying $\sum_{k=1}^{\eta} \alpha_{lk}(q) < 0$, then there exists $\vt_0\in\rd$ such that $|f_{\vsig}(\vt_0)|>1$. 
As in the proof of Lemma \ref{lm:ln}, for both cases when $\va_{1}, \ldots , \va_{\varphi}$ satisfy the condition LI or LR, we have the existence of $\vt_0\in\rd\backslash\{0\}$ and algebraic real numbers $\om_1,\omega_2, \ldots ,\omega_{\varphi}$ which are linearly independent over the rationals with $\omega_1=1$. 
Thus we can say the following.

Define a vector-valued function $D(T\vt_0)$, $T\in\R$, as follows:
\begin{equation*}
\begin{split}
D(T\vt_0)&:= \log \left|\frac{Z^{\eta,\varphi}_E (\vsig + {\rm{i}}T\vt_0)}{Z^{\eta,\varphi}_E (\vsig)}\right| \\
&=\frac{1}{2} \sum_p \sum_{r=1}^{\infty} \sum_{l=1}^{\varphi} \sum_{k=1}^{\eta} \frac{1}{r} 
\left( \alpha_{lk}(p)^r p^{-r\langle\va_l,\vsig\rangle} 
\bigl(p^{r{\rm{i}}T\langle\va_l,\vt_0\rangle} + p^{-r{\rm{i}}T\langle\va_l,\vt_0\rangle} -2 \bigr)\right) \\
&=\frac{1}{2} \sum_p \sum_{r=1}^{\infty} \sum_{l=1}^{\varphi} \frac{1}{r} 
\biggl( \sum_{k=1}^{\eta} \alpha_{lk}(p)^r \biggl) \Bigl( p^{-r\langle\va_l,\vsig\rangle} 
\bigl(p^{r{\rm{i}}T\omega_l} + p^{-r{\rm{i}}T\omega_l} -2 \bigr)\Bigr) .
\end{split}
\end{equation*}

Now put $\beta_l (p) := \sum_{k=1}^{\eta} \alpha_{lk} (p)$ and $\beta_l^{(r)} (p) := \sum_{k=1}^{\eta} \alpha_{lk} (p)^r$ for $1\le l\le \varphi$, $p\in\Prime$ and $r\in\N$. 
Then, for $j\in\N$, $\beta_l (p)$ and $\beta_{l}^{(2j-1)} (p)$ have the same sign and $\beta_{l}^{(2j)} (p) \ge 0$. 
So that we have $\beta_l (p)\ge 0$ if and only if $\beta_l^{(r)} (p)\ge 0$ for all $r\in\N$.
Replace $\alpha_l (p)^r$ by $\beta_l^{(r)} (p)$, we can show the existence of $\vt_0\in\rd$ such that $|f_{\vsig}(\vt_0)|>1$ by following the proof of Lemma \ref{lm:ln}.
Therefore, to complete the proof, we only have to show that $f_{\vsig}$ is a compound Poisson characteristic function with a finite L\'evy measure $N^{\eta,\varphi}_{\vsig}$ if $\sum_{k=1}^{\eta} \alpha_{lk} (p) \ge 0$ for all $1\le l \le \varphi$ and $p\in\Prime$.

Suppose that $\sum_{k=1}^{\eta} \alpha_{lk} (p) \ge 0$ for all $1\le l \le \varphi$ and $p\in\Prime$.
Then we have 
\begin{align*}
\log f_{\vsig}(\vt)=& \log \frac{Z^{\eta,\varphi}_E (\vsig + {\rm i}\vt)}{Z^{\eta,\varphi}_E (\vsig)} 
= \sum_p \sum_{r=1}^{\infty} \sum_{l=1}^{\varphi} \sum_{k=1}^{\eta}
\frac{1}{r}\alpha_{lk}(p)^r p^{-r\langle\va_l,\vsig\rangle} 
\bigl(p^{-r\langle\va_l,{\rm i}\vt\rangle} -1\bigr)
\nonumber\\ 
=& \sum_p \sum_{r=1}^{\infty} \sum_{l=1}^{\varphi} \sum_{k=1}^{\eta} \frac{1}{r} 
\alpha_{lk}(p)^r p^{-r\langle\va_l,\vsig\rangle} \bigl(e^{-r\langle\va_l,{\rm i}\vt\rangle \log p} -1\bigr)
\nonumber\\ 
= &\int_{\rd} (e^{-\langle{\rm i}\vt,x\rangle}-1) \sum_p \sum_{r=1}^{\infty} \sum_{l=1}^{\varphi} \sum_{k=1}^{\eta}
\frac{1}{r} \alpha_{lk}(p)^r p^{-r\langle\va_l,\vsig\rangle} \delta_{\log p^r \va_l} (dx)\nonumber\\
=&\int_{\rd} (e^{-\langle{\rm i}\vt,x\rangle}-1)N^{\eta,\varphi}_{\vsig}(dx).\label{eq:chfmt1}
\end{align*}
By following the proof of Theorem \ref{th:cfep3}, we can say that $N^{\eta,\varphi}_{\vsig}$ is a finite L\'evy measure on $\rd$ and $f_{\vsig}$ is a compound Poisson characteristic function.
This completes the proof.
\end{proof}

Finally, we define a multidimensional zeta distribution generated by $Z^{\eta,\varphi}_E$.

\begin{definition}[Multidimensional $\eta$-tuple $\varphi$-rank compound Poisson zeta distribution]
Let $\va_{1}, \ldots , \va_{\varphi} $ be $\R^d$-valued vectors satisfying LI or LR, $\sum_{k=1}^{\eta}\alpha_{lk}(p)\ge 0$ with $\alpha_{lk} (p)= 0$ or \,$\pm1$ for $1\le l \le \varphi$, $1\le k \le \eta$ and $p\in\Prime$ in \eqref{eq:defbeta}. 
A distribution on $\rd$ is a multidimensional $\eta$-tuple $\varphi$-rank compound Poisson zeta distribution if it has a characteristic function 
$$
f_{\vsig}\left(\vt\,\right) =\frac{Z^{\eta,\varphi}_E\left(\vsig +{\rm i}\vt\,\right)}{Z^{\eta,\varphi}_E(\vsig)}.
$$
\end{definition}

In the following, we again use functions $L_1$ and $L_2$ appeared in Example \ref{ex:41}.

\begin{example}
\pn
$(i)$ Functions to be $\eta$-tuple $\varphi$-rank compound Poisson zeta.
$$
\zeta (s_1)L_m(s_1)\zeta (s_1+s_2)L_m(s_1+s_2),\q m=1,2. 
$$
\pn
$(ii)$ Functions not to generate probability distributions. 
$$
L_m(s_1)\zeta (s_1+s_2)L_m(s_1+s_2), \q\zeta (s_1)L_m(s_1)L_m(s_1+s_2),\q m=1,2. 
$$
\end{example}

\subsection{Moments}

We have the following.

\begin{theorem}
Let $k\in\N$.
For $Z_E (\vs) \in \mathcal{Z}_E$, if $f_{\vsig} (\vt) = Z_E (\vsig+{\rm{i}}\vt)/Z_E (\vsig)$ is a characteristic function, then the distribution deduced by $f_{\vsig} (\vt)$ has a finite absolute moment of order $2k$.
\label{th:mepmoment}
\end{theorem}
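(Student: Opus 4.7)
The plan is to invoke Proposition \ref{pro:Sato}(ii), which guarantees that whenever the characteristic function of a distribution is of class $C^n$ in a neighborhood of the origin for some positive even integer $n$, the distribution has a finite absolute moment of order $n$. Accordingly, I will show that $f_{\vsig}$ is of class $C^{2k}$ (in fact $C^\infty$, and indeed real-analytic) on all of $\rd$.

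First I would combine Theorem \ref{th:EPc} with Proposition \ref{pro:co5.9}. The former gives absolute convergence and non-vanishing of the product $Z_E(\vs)$ throughout the open region $G := \{\vs \in \mathbb{C}^d : \min_{1\le l\le m} \Re \langle \va_l, \vs\rangle > 1\}$, while the latter promotes this to holomorphy of $Z_E$ on $G$. Since $\vsig$ is taken so that $\min_{1\le l\le m} \langle \va_l, \vsig\rangle > 1$ (the standing hypothesis for $f_{\vsig}$ to be defined as a proper quotient), the affine map $\vt \mapsto \vsig + {\rm i}\vt$ carries all of $\rd$ into $G$, so $\vt \mapsto Z_E(\vsig + {\rm i}\vt)$ is real-analytic on $\rd$.

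Because $Z_E(\vsig) \ne 0$ by the non-vanishing part of Theorem \ref{th:EPc}, the quotient $f_{\vsig}(\vt) = Z_E(\vsig + {\rm i}\vt)/Z_E(\vsig)$ is then also real-analytic on $\rd$, and in particular of class $C^{2k}$ in a neighborhood of the origin. Proposition \ref{pro:Sato}(ii) applied with $n = 2k$ immediately yields the desired finite absolute moment of order $2k$, completing the proof.

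There is no essential obstacle here; the entire argument rests on the locally uniform absolute convergence of \eqref{eq:9.19} (together with that of all its partial derivatives, since differentiating term by term produces factors of the form $(r\log p)^n$, which for fixed $n$ are dominated by $p^{r\epsilon}$ for any $\epsilon > 0$ and therefore do not destroy summability on compacta inside $G$). This is exactly the multivariable content of Proposition \ref{pro:co5.9}, so no new estimate needs to be worked out.
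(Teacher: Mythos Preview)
Your argument is correct and reaches the goal via Proposition~\ref{pro:Sato}(ii) just as the paper does, but the route is packaged differently. The paper first expands each Euler factor as a geometric series to write $Z_E(\vs)$ as a multiple Dirichlet series $\sum_{n_1,\ldots,n_m} A_1(n_1)\cdots A_m(n_m)\,n_1^{-\langle\va_1,\vs\rangle}\cdots n_m^{-\langle\va_m,\vs\rangle}$ with bounded coefficients, and then checks directly that each partial derivative $\partial/\partial s_h$ introduces only a factor $\sum_l \log n_l^{-a_{lh}}$, which is $O((n_1\cdots n_m)^\varepsilon)$ and hence does not disturb absolute convergence; iterating gives the needed $C^{2k}$ regularity. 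You instead invoke holomorphy of $Z_E$ on the region $G\subset\mathbb{C}^d$ (a mild several-variables extension of Proposition~\ref{pro:co5.9}, which as stated is one-variable) and pull back along the real-affine map $\vt\mapsto\vsig+{\rm i}\vt$ to get real-analyticity of $f_{\vsig}$ in one stroke. Your final paragraph in fact sketches the same $(r\log p)^n\ll p^{r\varepsilon}$ estimate that underlies the paper's explicit computation, so the analytic content is identical; your version is more conceptual, while the paper's has the virtue of exhibiting the Dirichlet-series coefficients explicitly and not needing any appeal to several-complex-variables facts.
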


\begin{remark}
Note that this theorem also contains some other functions of $\mathcal{Z}_E$ which were not considered in Sections 3.1, 3.2 and 3.3.
\end{remark}

\begin{proof}
We have, for $1\le l\le m$,
$$
\prod_{p} \bigl( 1 - \alpha_{l}(p) p^{-\langle \va_l,\vs\rangle} \bigr)^{-1} =
\prod_{p} \biggl( 1 + \sum_{k=1}^\infty \frac{\alpha_{l}(p)^k}{p^{k\langle \va_l,\vs\rangle}} \biggr) =
\sum_{n=1}^{\infty} \frac{A_l(n)}{n^{\langle \va_l,\vs\rangle}}, 
\q A_l (n)= \prod_{p|n} \alpha_{l}(p)^{\nu(n;p)}.
$$
This equality coincides with \cite[Lemma 2.2]{Steuding1} when $m=1$.
Note that $|A_l(n)| \le 1$ since $-1 \le \alpha_{l}(p) \le 1$. 
Thus we have
$$
\prod_p \prod_{l=1}^m \bigl( 1 - \alpha_{l}(p) p^{-\langle \va_l,\vs\rangle} \bigr)^{-1} =
\prod_{l=1}^m \sum_{n_l=1}^{\infty} \frac{A_l(n_l)}{n_l^{\langle \va_l,\vs\rangle}} =
\sum_{n_1, \ldots ,n_m=1}^\infty
\frac{A_1(n_1)}{n_1^{\langle \va_1,\vs\rangle}} \cdots \frac{A_m(n_m)}{n_m^{\langle \va_m,\vs\rangle}}.
$$
Obviously, we have $\prod_{l=1}^m |A_l(n_l)| \le 1$. Therefore the series in the formula above converges absolutely. Put $\va_l := (a_{l1}, \ldots, a_{ld})$. For any $1 \le h \le d$, $\varepsilon >0$ and sufficiently large $n_1, \ldots , n_m$ we have
$$
|\log n_1^{-a_{1h}} + \cdots + \log n_m^{-a_{mh}} |\le ( n_1 \cdots n_m )^{\varepsilon} .
$$
Therefore we have
\begin{equation*}
\begin{split}
\frac{\partial}{\partial s_h} Z_E (\vsig+{\rm{i}}\vt) =& \sum_{n_1, \ldots ,n_m=1}^\infty \frac{\partial}{\partial s_h} \frac{A_1(n_1)}{n_1^{\langle \va_1,\vs\rangle}} \cdots \frac{A_m(n_m)}{n_m^{\langle \va_m,\vs\rangle}} \\
= & \sum_{n_1, \ldots ,n_m=1}^\infty
\frac{A_1(n_1)}{n_1^{\langle \va_1,\vs\rangle}} \cdots \frac{A_m(n_m)}{n_m^{\langle \va_m,\vs\rangle}}
\sum_{l=1}^{m} \log n_l^{-a_{lh}}. 
\end{split}
\end{equation*}
Hence the series of $(\partial / \partial s_h) Z_E (\vsig+{\rm{i}}\vt)$ also converges absolutely. 
Inductively, the series of $(\partial^{k_1} / \partial^{k_1} s_1) \cdots (\partial^{k_d} / \partial^{k_d} s_d) Z_E (\vsig+{\rm{i}}\vt)$ converges absolutely, too.
Thus we can complete the proof by Proposition \ref{pro:Sato} (ii) since $f_{\vsig} (\vt)$ is a characteristic function.
\end{proof}

\section{$1$-dimensional important examples}

We have studied multidimensional case of zeta functions and their definable probability distributions on $\rd$.
Though, as in Sections 1.3 and 1.4, what is known about them are still not enough even for $1$-dimensional case.  
In this section, we give some important examples of $1$-dimensional zeta functions whose normalized functions appear to play interesting roles in our story.
Before we mention them, we need the following well-known function which is also mentioned in Section 1.2.

\begin{definition}[Dedekind zeta function of $\mathbb{Q}({\rm i})$ (see, e.g.\,\cite{Cohen})]
Let $\mathbb{Q}({\rm i})$ be a quadratic field of discriminant $-1$ and put
\begin{equation}\label{f:L}
L(s) := \prod_{p \,:\, {\rm{odd}}} \Bigl( 1-(-1)^{\frac{p-1}{2}}p^{-s} \Bigr)^{-1}.
\end{equation}
Then the Dedekind zeta function of $\mathbb{Q}({\rm i})$ is a function of a complex variables $s=\sigma +{\rm i}t$, for $\sigma >1$ given by
$$
\zeta_{{\mathbb{Q}}({\rm i})} (s) := \zeta (s) L(s).
$$
\end{definition}

Now we give the following examples. To understand these examples are not easy, so that we give their proofs in the next subsections.

\begin{example}\label{ex:5}
$(i)$ The Dedekind zeta function generates a \textit{m}-tuple compound Poisson characteristic function. \\
$(ii)$ Let $L(s)$ be the function given in $\eqref{f:L}$. 
For $\sigma >1$, $\zeta (s)^2 L(2s)$ generates an infinitely divisible characteristic function but $L(s) \zeta (2s)$ does not generate even a characteristic function.
\end{example}

\subsection{Proof of Example \ref{ex:5} (i)}

By Theorem \ref{th:cfep}, this belongs to \textit{m}-tuple compound Poisson zeta distribution. 
Though, we write its L\'evy measure explicitly as to compare with which appears in Example \ref{ex:5} (ii). 
We have
\begin{equation*}
\begin{split}
&\log \frac{\zeta_{\Q ({\rm{i}})}(\sigma+{\rm{i}}t)}{\zeta_{\Q ({\rm{i}})}(\sigma)} 
=\log \frac{\zeta(\sigma+{\rm{i}}t) L(\sigma+{\rm{i}}t)}{\zeta(\sigma) L(\sigma)} \\
=&\sum_{r=1}^{\infty} \frac{1}{r} 2^{-r\sigma} \bigl( 2^{-r{\rm{i}}t} -1 \bigr) +
\sum_{p \ge 3} \sum_{r=1}^{\infty} \frac{1}{r} \Bigl( 1+(-1)^{\frac{r(p-1)}{2}} \Bigr) 
p^{-r\sigma} \bigl( p^{-r{\rm{i}}t} -1 \bigr) \\
=&\int_0^{\infty}\left(e^{{\rm i}tx}-1\right)N_{\sigma}(dx),
\end{split}
\end{equation*}
where $N_{\sigma}$ is a finite L\'evy measure on $\R$ given by
$$
N_{\sigma}(dx)=\sum_{r=1}^{\infty} \frac{1}{r} 2^{-r\sigma} \delta_{r\log 2}(dx) + \sum_{p \ge 3} 
\sum_{r=1}^{\infty} \frac{1}{r} \Bigl( 1+(-1)^{\frac{r(p-1)}{2}} \Bigr) p^{-r\sigma}\delta_{r\log p}(dx) .
$$
Thus $\zeta_{\Q ({\rm{i}})}(\sigma+{\rm{i}}t)/\zeta_{\Q ({\rm{i}})}(\sigma)$ is a compound Poisson characteristic function which implies it is a \textit{m}-tuple infinitely divisible characteristic function.

\subsection{Proof of Example \ref{ex:5} (ii)}

First we prove that $f_{\sigma}(t):=\zeta (\sigma +{\rm i}t)^2 L(2\sigma +2{\rm i}t)\zeta (\sigma)^{-2} L(2\sigma)^{-1}$, $\sigma >1, t\in\R$, is an infinitely divisible characteristic function.
We have
\begin{equation*}
\begin{split}
\log f_{\sigma}(t)
=&\sum_{r=1}^{\infty} \frac{2}{r} 2^{-r\sigma} \bigl( 2^{-r{\rm{i}}t} -1 \bigr) +
\sum_{p \ge 3} \sum_{k=1}^{\infty} \frac{2}{2k-1} p^{-(2k-1)\sigma} \bigl( p^{-(2k-1){\rm{i}}t} -1 \bigr) \\
&+ \sum_{p \ge 3} \sum_{k=1}^{\infty} 
\frac{1}{k} \Bigl( 1+(-1)^{\frac{k(p-1)}{2}}\Bigr) p^{-2k\sigma} \bigl( p^{-2k{\rm{i}}t} -1 \bigr)\\
=&\int_0^{\infty}\left(e^{{\rm i}tx}-1\right)N_{\sigma}(dx),
\end{split}
\end{equation*}
where $N_{\sigma}$ is a finite L\'evy measure on $\R$ given by
\begin{equation*}
\begin{split}
N_{\sigma}(dx)=&\sum_{r=1}^{\infty} \frac{2}{r} 2^{-r\sigma} \delta_{r\log 2}(dx) +
\sum_{p \ge 3} \sum_{k=1}^{\infty} \frac{2}{2k-1} p^{-(2k-1)\sigma} \delta_{(2k-1)\log p}(dx) \\
&+\sum_{p \ge 3} \sum_{k=1}^{\infty} \frac{1}{k} \Bigl( 1+(-1)^{\frac{k(p-1)}{2}}\Bigr) p^{-2k\sigma} \delta_{2k\log p}(dx) .
\end{split}
\end{equation*}
Thus $f_{\sigma}$ generated by $\zeta (s)^2 L(2s)$, $\sigma >1$, is a compound Poisson characteristic function 
which implies it is an infinitely divisible characteristic function.\\

Next we prove that $g_{\sigma}(t):=\zeta (2\sigma +2{\rm i}t) L(\sigma +{\rm i}t)\zeta (2\sigma)^{-1} L(\sigma)^{-1}$, $\sigma >1, t\in\R$, is not a characteristic function. Put $D(t) := \log |g_{\sigma}(t)|$. Then one has 
\begin{equation*}
\begin{split}
D(t) =& \,  2^{-2r\sigma -1} \bigl( 2^{2r{\rm{i}}t} + 2^{-2r{\rm{i}}t}- 2 \bigr) \\ &+ 
\frac{1}{2} \sum_{p\ge 3} \sum_{r=1}^\infty \Bigl( p^{-2r\sigma} \bigl( p^{2r{\rm{i}}t} + p^{-2r{\rm{i}}t}- 2 \bigr) + (-1)^{\frac{p-1}{2}r} p^{-r\sigma} \bigl( p^{r{\rm{i}}t} + p^{-r{\rm{i}}t} - 2 \bigr) \Bigr).
\end{split}
\end{equation*}
For any $\ep >0$, we can see that there exists an integer $K$ such that $|2\sum_{r,p>2K}' (p^{-2r\sigma}+p^{-r\sigma})|< \varepsilon$ by the absolute convergence (see, Theorem \ref{th:EPc}). 
Obviously, we have
$$
(-1)^{\frac{p-1}{2}} = 
\begin{cases}
1 & p \equiv 1 \mod 4, \\
-1 & p \equiv 3 \mod 4.
\end{cases}
$$
In the view of $p_k^{{\rm{i}}t}= e^{{\rm{i}}t\log p_k} = e^{2\pi {\rm{i}}t\theta_k}$, \eqref{eq:deftheta} and by Proposition \ref{pro:kroap1}, for any $\ep '>0$ independent of $\ep$ and $K$, there exists $T_0 \in \R$ such that
$$
|p^{{\rm{i}}T_0}-1| < \varepsilon', \q p \in \Prime^+_K, \q \mbox{and} \q 
|q^{{\rm{i}}T_0}+1| < \varepsilon', \q q \in \Prime^-_K,
$$
where $\Prime^+_K := \{ p\in\Prime : p=2 \q \mbox{or} \q 5 \le p \le 2K, \q p \equiv 1 \mod 4 \}$ and $\Prime^-_K := \{ q\in\Prime : 3 \le q \le 2K, \q q \equiv 3 \mod 4 \}$.
By the factorization $x^{2r}-1 = (x+1)(x-1)(x^{2r-2}+x^{2r-4}+\cdots+1)$ when $r\in \N$ and $1 \le r \le 2K$, it holds that $|p^{2r{\rm{i}}T_0}-1| < 2r \varepsilon' \le 4K\varepsilon'$, $p \in \Prime_K$, where $\Prime_K := \{ p \in \Prime : 2 \le p \le 2K \}$. 
Thus by following the proof of Lemma \ref{lem:key}, there exits $T_0\in\R$ such that
\begin{equation*}
\begin{split}
-8K\varepsilon' < p^{2r{\rm{i}}T_0} + p^{-2r{\rm{i}}T_0} -2 \le 0, \q
&p\in\Prime_K, \,\, 1 \le r \le 2K ,\\
-4K\varepsilon' < p^{r{\rm{i}}T_0} + p^{-r{\rm{i}}T_0} -2 \le 0, \q
&p\in\Prime^+_K, \,\, 1 \le r \le 2K ,\\
-4 -4K\varepsilon' < q^{(2k-1){\rm{i}}T_0} + q^{-(2k-1){\rm{i}}T_0} - 2 < -4 + 4K\varepsilon' , \q
&q\in\Prime^-_K, \,\, 1 \le k \le K,\\ 
-4K\varepsilon' \le q^{2k{\rm{i}}T_0} + q^{-2k{\rm{i}}T_0} - 2 \le 0, \q
&q\in\Prime^-_K, \,\, 1 \le k \le K.
\end{split}
\end{equation*}

Therefore we obtain
\begin{equation*}
\begin{split}
D(T_0) >&-\ep -4K\varepsilon' \sum_{p \in \Prime} \sum_{r=1}^{2K} \frac{1}{r} p^{-2r\sigma} -
2K\varepsilon' \sum_{p \in \Prime^+_K} \sum_{r=1}^{2K} \frac{1}{r} p^{-r\sigma} \\
& + (2K\varepsilon' -2)\sum_{q \in \Prime^-_K}\sum_{k=1}^{K} \frac{(-1)^{2k-1}}{2k-1} q^{-(2k-1)\sigma}
-2K\varepsilon' \sum_{q \in \Prime^-_K}\sum_{k=1}^{K} \frac{(-1)^{2k}}{2k} q^{-2k\sigma}.
\end{split}
\end{equation*}
Suppose $\varepsilon$ is sufficiently small and $\varepsilon'$ such that $KC'\varepsilon' < \varepsilon$. 
Then we have $D(T_0) >0$ by
$$
-2 \sum_{q \in \Prime^-_K}\sum_{k=1}^{K} \frac{(-1)^{2k-1}}{2k-1} q^{-(2k-1)\sigma} >0. 
$$
This completes the proof.\\

Throughout this paper, we have considered when multivariable Euler products to generate infinitely divisible zeta distributions on $\rd$ 
since it is difficult to obtain them by series representations studied in \cite{AN12s}.
However, they also include products which generate not infinitely divisible $\rd$-valued characteristic functions and not even to generate 
characteristic functions.
Their properties seem to be interesting but rather difficult to be treated by our multidimensional polynomial Euler product.
To obtain more detail of behaviors of products in this view, 
we have studied them by treating multivariable finite Euler products, which are simple cases of our products, in \cite{AN12q}.

\section*{Acknowledgment}
The second author was partially supported by JSPS grant 16K05077.

 
\end{document}